\newclass{\EXPTIME}{EXPTIME}
\DeclareMathOperator{\girth}{girth}
\DeclareMathOperator{\dist}{dist}
\DeclarePairedDelimiter\abs{\lvert}{\rvert}
\newcommand{\N}{\mathbb{N}}
\newcommand{\Pralat}{Pra{\l}at\xspace}
\title{Cops and Robber -- When Capturing is not Surrounding}
\author{Paul Jungeblut}{Karlsruhe Institute of Technology, Germany}{paul.jungeblut@kit.edu}{https://orcid.org/0000-0001-8241-2102}{}
\author{Samuel Schneider}{Karlsruhe Institute of Technology, Germany}{samuel.schneider@kit.edu}{https://orcid.org/0009-0002-9680-4048}{}
\author{Torsten Ueckerdt}{Karlsruhe Institute of Technology, Germany}{torsten.ueckerdt@kit.edu}{https://orcid.org/0000-0002-0645-9715}{}
\authorrunning{P. Jungeblut, S. Schneider, T. Ueckerdt}
\keywords{Cops and Robber, Pursuit–evasion games, surrounding}
\begin{document}

\maketitle

\begin{abstract}
    We consider \enquote{surrounding} versions of the classic Cops and Robber game.
    The game is played on a connected graph in which two players, one controlling several cops and the other controlling a single robber, take alternating turns.
    In a turn, each player may move each of their pieces.
    The robber always moves between adjacent vertices.
    Regarding the moves of the cops, we distinguish four versions that differ in whether the cops are on the vertices or the edges of the graph and whether the robber may move on/through them.
    The goal of the cops is to surround the robber, i.e., to occupy all neighbors (vertex version) or incident edges (edge version) of the robber's current vertex.
    In contrast, the robber tries to avoid being surrounded indefinitely.
    Given a graph, the so-called \emph{cop number} denotes the minimum number of cops required to eventually surround the robber.

    We relate the different cop numbers of these versions, and prove that none of them is bounded by a function of the classical cop number and the maximum degree of the graph, thereby refuting a conjecture by Crytser, Komarov and Mackey~[Graphs and Combinatorics, 2020].
\end{abstract}

%%%%%%%%%%%%%%%%%%%%%%%%%%%%%
%% I N T R O D U C T I O N %%
%%%%%%%%%%%%%%%%%%%%%%%%%%%%%
\section{Introduction}

\emph{Cops and Robber} is a well-known combinatorial game played by two players on a graph $G = (V,E)$.
The robber player controls a single robber, which we shall denote by~$r$, whereas the cop player controls~$k$ cops, denoted $c_1, \ldots, c_k$, for some specified integer~$k \geq 1$.
The players take alternating turns, and may perform one move with each of their pieces (the single robber or the~$k$ cops) in each turn.
In the classical game (and also many of its variants), the vertices of~$G$ are the possible positions for the pieces, while the edges of~$G$ model the possible moves.
Let us remark that no piece is forced to move, i.e., there is no \emph{zugzwang}.
On each vertex there can be any number of pieces.

The game begins with the cop player choosing vertices as the starting positions for the~$k$ cops $c_1, \ldots, c_k$.
Then, seeing the cops' positions, the robber player places~$r$ on a vertex of~$G$ as well.
The cop player wins if the cops \emph{capture} the robber, which in the classical version means that at least one cop stands on the same vertex as the robber.
On the other hand, the robber player wins if the robber can avoid being captured indefinitely.

The \emph{cop number}~$c(G)$ of a given connected\footnote{
    Cops cannot move between different connected components, so the cop number of any graph is the sum over all components.
    We thus consider connected graphs only.
} graph $G = (V,E)$ is the smallest~$k$ for which~$k$ cops can capture the robber in a finite number of turns.
Clearly, every graph satisfies $1 \leq c(G) \leq \abs{V}$.

We consider several versions of the Cops and Robber game.
In some of these, the cops are placed on the edges of~$G$ and allowed to move to an \emph{adjacent} edge (i.e., an edge sharing an endpoint) during their turn.
In all our versions, the robber acts as in the original game but loses the game if he is \emph{surrounded}\footnote{
    To distinguish between the classical and our versions, we use the term \emph{capture} to express that a cop occupies the same vertex as the robber.
    In contrast, a \emph{surround} always means that all neighbors, respectively incident edges, are occupied.
} by the cops, meaning that they have to occupy all adjacent vertices or incident edges.
At all times, let us denote by~$v_r$ the vertex currently occupied by the robber.
Specifically, we define the following versions of the game, each specifying the possible positions for the cops and the exact surrounding condition:

\begin{description}
    \item[Vertex Version]
    Cops are positioned on vertices of~$G$ (like the robber).
    They surround the robber if there is a cop on each neighbor of~$v_r$.
    Let~$c_V(G)$ denote the smallest number of cops needed to eventually surround the robber.

    \item[Edge Version]
    Cops are positioned on edges of~$G$.
    A cop on an edge~$e$ can move to any edge~$e'$ sharing an endpoint with~$e$ during its turn.
    The cops surround the robber if there is a cop on each edge incident to~$v_r$.
    Let~$c_E(G)$ denote the smallest number of cops needed to eventually surround the robber.
\end{description}

In both versions above, the robber sits on the vertices of~$G$ and moves along the edges of~$G$.
Due to the winning condition for the cops being a full surround, the robber may come very close to, say, a single cop without being threatened.
As this can feel counterintuitive, let us additionally consider a restrictive version of each game.
Here, we constrain the possible moves for the robber when cops are close by.
These \emph{restrictive} versions are given by the following rules:

\begin{description}
    \item[Restrictive Vertex Version]
    After the robber's turn, there may not be any cop on~$v_r$.
    In particular, the robber may not move onto a vertex occupied by a cop.
    Additionally, if a cop moves onto~$v_r$, then the robber must leave that vertex in his next turn.

    \item[Restrictive Edge Version.]
    The robber may not move along an edge that is currently occupied by a cop.
\end{description}

We denote the cop numbers of the restrictive versions by putting an additional \enquote{$\mathrm{r}$} in the subscript, i.e., the smallest number of cops needed to eventually surround the robber in these versions is $c_{V,\mathrm{r}}(G)$ and $c_{E,\mathrm{r}}(G)$, respectively.

Clearly, the restrictive versions are favorable for the cops, as they only restrict the robber.
Consequently, the corresponding cop numbers are always at most their nonrestrictive counterparts.
Thus, for every connected graph~$G$ we have
\begin{equation}
    \label{eqn:restrictive_smaller}
    c_{V, \mathrm{r}}(G) \leq c_{V}(G)
    \quad \text{and} \quad
    c_{E, \mathrm{r}}(G) \leq c_{E}(G)
    \text{.}
\end{equation}

\medskip
\noindent
A recent conjecture by Crytser, Komarov and Mackey~\cite{Crytser2020_Containment} states that the cop number in the restrictive edge version can be bounded from above by the classical cop number and the maximum degree of the graph:

\begin{conjecture}[\cite{Crytser2020_Containment}]
    \label{conj:edge_restrictive_vs_classic}
    For every connected graph~$G$ we have~$c_{E, \mathrm{r}}(G) \leq c(G) \cdot \Delta(G)$.
\end{conjecture}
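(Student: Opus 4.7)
The plan is to take a winning classical strategy for~$c(G) = k$ cops and simulate each classical cop by a dedicated team of~$\Delta(G)$ edge cops, using $k \cdot \Delta(G)$ edge cops in total. First I would fix the invariant: whenever classical cop~$i$ stands on vertex~$v$, team~$i$ occupies every edge incident to~$v$; if $\deg(v) < \Delta(G)$, the spare cops may stack on a single edge. The payoff is automatic: the instant the classical strategy brings some cop~$i$ onto the robber's vertex~$v_r$, the corresponding edge team already occupies every edge incident to~$v_r$, which is exactly a surround in the edge version.

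Next I would verify that the simulation faithfully restricts the robber. Since team~$i$ occupies all edges incident to its cop's vertex~$v$, the restrictive edge rule forbids the robber from moving onto~$v$, mirroring the behaviour of a sensible robber in the classical game. As long as the invariant holds, the robber's legal moves in the edge game form a subset of his legal moves in the classical game, so the fixed classical strategy remains valid against any edge-game robber play.

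The delicate step is the \emph{team transition}. When the classical strategy moves cop~$i$ from~$v$ to an adjacent vertex~$w$, team~$i$ must slide its coverage from the edges at~$v$ to the edges at~$w$. The cop sitting on the edge~$vw$ is adjacent in the line graph to every edge incident to~$w$, so it redeploys in a single turn. Every other team member on an edge~$(v,x)$ with $x \ne w$ shares the endpoint~$v$ with~$vw$, hence can reach~$vw$ in one move and then spread to an edge incident to~$w$ on the following turn. A full transition therefore naturally consumes two edge-cop turns, during which the robber moves twice instead of once.

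The hard part---and, in my view, the crux of the entire argument---is absorbing this lost tempo. My plan is to allow only one classical cop to move per round and to argue that, throughout the two-turn transition, enough of team~$i$ remains on edges incident to~$v$ to keep the robber from escaping through~$v$ or from cutting behind cop~$i$. Ideally I would prove an auxiliary robustness lemma of the flavour \enquote{a classical winning strategy still catches a robber who is occasionally granted one extra move, provided the robber is forbidden from the vertex the moving cop is currently vacating}. Establishing, or refuting, such a lemma is where I expect the attempt to stand or fall; should it fall, the same failure would suggest how to engineer a counterexample to the conjecture.
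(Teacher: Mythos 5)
This statement is a conjecture of Crytser, Komarov and Mackey, and the main point of the paper is that it is \emph{false}: \cref{thm:surrounding_vs_classic}, proved in \cref{sec:capturing_is_not_surrounding}, exhibits graphs $G_k = H[s,\ell,m]$ with $c(G_k) \leq 2$ and $\Delta(G_k) = 3$ but $c_{E,\mathrm{r}}(G_k) \geq 2^{s-2}$, so $c_{E,\mathrm{r}}(G)$ is not bounded by $c(G)\cdot\Delta(G)$, nor by any function of $c(G)$ and $\Delta(G)$. Hence no proof attempt can be completed, and the useful question is where yours breaks. You have in fact located it yourself: the argument stands or falls with the team transition and the hoped-for robustness lemma, and both fail. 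First, the \enquote{payoff is automatic} step is already circular: your invariant (team $i$ covers all edges at the vertex where classical cop $i$ \emph{stands}) cannot be maintained at the very moment the classical cop steps onto $v_r$, because the team needs two further turns to slide its coverage from $v$ to $v_r$, and a surround must be in place \emph{when} the robber is there, not two robber moves later. Second, the robustness lemma is false in general: granting the robber an extra move per cop move is essentially the fast-robber variant, whose cop number is not controlled by $c(G)$, and long subdivided paths (exactly the ingredient $P(ab)$ of length $2\ell+1$ in the construction) are the standard way tempo losses become fatal.

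More fundamentally, the counterexample shows that the obstruction is not merely tempo but locality. In $H[s,\ell,m]$ the two classical cops win by a non-local mirroring strategy (\cref{lem:Hslm_cop_number_2}): one cop copies the robber's moves in the isomorphic copy $H_1$ while the other pursues in $H_{-}$, and capture happens because $H_2$ minus the middle edges $e(ab)$ is a forest. This strategy never assembles $\deg(v_r) = 3$ pieces of presence around the robber, so there is nothing for your $\Delta(G)$-fold teams to simulate that would ever produce a surround. Conversely, \cref{lem:Hslm_surrounding_cop_number_k} shows that against $k-1$ cops the robber, sitting at a root $r_2(a)$, can always steer down $T_2^{\mathrm{out}}(a)$ towards one of the $k$ outgoing paths whose surroundings are cop-free, reach the next root in a \enquote{good situation}, and repeat forever; by \cref{itm:vr_er} of \cref{thm:bounds} this forces $c_{E,\mathrm{r}}(G_k) \geq k/2$. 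In short: capture is an eventual, global event, while surrounding requires $\deg(v_r)$ cops to be simultaneously local, and your reduction has no mechanism to convert the former into the latter — which is precisely the conceptual gap the paper's construction exploits.
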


\Pralat~\cite{Pralat2015_Containment} verified \cref{conj:edge_restrictive_vs_classic} for the random graph~$G(n,p)$, i.e., the graph on~$n$ vertices where each possible edge is chosen independently with probability~$p$, for some ranges of~$p$.
Let us note that \cref{conj:edge_restrictive_vs_classic}, if true, would strengthen a theorem by Crytser, Komarov and Mackey~\cite{Crytser2020_Containment} stating that~$c_{E, \mathrm{r}}(G) \leq \gamma(G) \cdot \Delta(G)$, where~$\gamma(G)$ denotes the size of a smallest dominating set in~$G$.

\subsection{Our Results}

Our main contribution is to disprove \cref{conj:edge_restrictive_vs_classic}.
In fact, we prove that there are graphs~$G$ for which none of the surrounding cop numbers can be bounded by any function of~$c(G)$ and~$\Delta(G)$.
This proves that the classical game of Cops and Robber is sometimes fundamentally different from all its surrounding versions.

\begin{theorem}
    \label{thm:surrounding_vs_classic}
    There is an infinite family of connected graphs~$G$ with classical cop number $c(G) = 2$ and~$\Delta(G) = 3$ while neither~$c_V(G)$, $c_{V, \mathrm{r}}(G)$, $c_E(G)$ nor~$c_{E, \mathrm{r}}(G)$ can be bounded by any function of~$c(G)$ and the maximum degree~$\Delta(G)$.
\end{theorem}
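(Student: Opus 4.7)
The plan is to construct an explicit infinite family $\{G_n\}_{n\ge 1}$ of connected graphs satisfying $\Delta(G_n)=3$ and $c(G_n)=2$ on which all four surrounding cop numbers tend to infinity with~$n$. A useful observation to begin with: by~(\ref{eqn:restrictive_smaller}) the restrictive cop numbers are always at most their non-restrictive counterparts, so it suffices to \emph{lower}-bound only the two restrictive quantities $c_{V,\mathrm{r}}(G_n)$ and $c_{E,\mathrm{r}}(G_n)$; the corresponding lower bounds for $c_V$ and $c_E$ then follow for free, covering all four versions simultaneously.

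For the construction, I would aim for graphs that are globally tree-like (so that two classical cops can corner the robber by a shadow strategy) but locally three-way branching with large girth (so that surrounding a degree-$3$ vertex requires many cops that cannot be reused elsewhere). A natural template is a long spine with sufficiently deep, subdivided tree-gadgets grafted at each spine vertex, parameter $n$ controlling the gadget depth. The degree bound $\Delta(G_n)=3$ is then immediate from the construction, and $c(G_n)\ge 2$ follows from the presence of an induced cycle inside some gadget. The two-cop upper bound I would establish by giving an explicit strategy in which one cop patrols a separating vertex of the spine while the second cop sweeps through the gadget the robber currently inhabits, classically capturing it.

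The heart of the argument is the lower bound on $c_{V,\mathrm{r}}(G_n)$ and $c_{E,\mathrm{r}}(G_n)$. Fix $k$ and choose $n$ large enough that each gadget has depth exceeding some function of~$k$. The robber maintains the invariant that at every robber turn there is a move out of~$v_r$ --- to a neighbor in the vertex versions or across an incident edge in the edge versions --- which (a)~is permitted by the relevant restrictive rule and (b)~leads into a subregion currently containing fewer than one-third of the cops. A pigeonhole argument on the (at most) three branches at~$v_r$, combined with a potential function counting the cop population in each branch and accounting for the fact that edge cops travel slowly along subdivided edges, shows that such a safe move always exists as long as $k$ is below a suitable function of the gadget depth; iterating yields an infinite robber escape.

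The main obstacle is coordinating this escape lemma across all four versions at once. In the restrictive vertex version a single cop on~$v_r$ forces the robber to leave, and in the restrictive edge version a single cop on an incident edge deletes a branch from the pigeonhole; one must therefore build enough local redundancy into the gadgets --- e.g.\ through additional subdivisions that separate cops from one another --- so that even after a branch is removed by the restrictive rule, the pigeonhole still produces a safe escape. Ensuring this redundancy without letting $\Delta$ exceed~$3$, and simultaneously preserving the two-cop classical strategy, is the delicate technical point that the rest of the paper is presumably devoted to.
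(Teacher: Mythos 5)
The reduction you start with is fine --- by \eqref{eqn:restrictive_smaller} it suffices to lower-bound the restrictive numbers, and the paper proceeds similarly (it lower-bounds only $c_{V,\mathrm{r}}$ and gets $c_{E,\mathrm{r}}$ via \cref{thm:bounds}) --- but the construction you propose cannot work. A graph that is globally tree-like, i.e.\ a spine with subdivided tree-gadgets (containing a few cycles) grafted on, has \emph{bounded} restrictive surrounding numbers no matter how deep the gadgets are: in the restrictive versions a single cop on a cut vertex (or a single edge cop on a bridge) permanently confines the robber to one side, so a constant number of cops first locks the robber into one gadget and then advances small separators toward him until every neighbour of his vertex is occupied. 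Your gadgets have treewidth at most about $2$, and Burgess et al.~\cite{Burgess2020_Surround} bound $c_{V,\mathrm{r}}$ in terms of treewidth; concretely, with $\Delta(G_n)=3$ one gets $c_{V,\mathrm{r}}(G_n)=O(1)$ and $c_{E,\mathrm{r}}(G_n)=O(1)$, so no choice of gadget depth as a function of $k$ yields the claimed lower bound. The pigeonhole invariant is unsound for the same reason: a branch ``containing fewer than one third of the cops'' is not safe when its interface to the rest of the graph is a single vertex or edge --- one cop seals it behind the robber (the restrictive rules make this permanent) and a few cops finish inside. The obstruction to surrounding is never local (three cops always suffice at a degree-$3$ vertex); unboundedness has to come from rich \emph{global} structure, which your route to $c(G_n)=2$ explicitly forbids.

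For contrast, the paper resolves exactly this tension. It starts from a $2k$-regular base graph $H[s]$ of girth at least $5$ with a balanced orientation, replaces each vertex by a complete binary tree of height $s$ (keeping $\Delta=3$) whose leaves correspond to the incident edges, and replaces each edge by a path of length $2\ell+1$. At each macro-vertex the robber has $k$ globally distinct escape routes, revealed one binary choice at a time as he descends the tree, and a counting invariant shows that $k-1$ cops can neither block all of them nor ever be ahead of him on the long path he commits to; this gives $c_{V,\mathrm{r}}\geq k$ (\cref{lem:Hslm_surrounding_cop_number_k}), and \cref{thm:bounds} transfers the bound to the other three versions. The classical cop number is kept at $2$ not by tree-likeness but by a gluing/shadow trick: two copies of this graph are identified along the middle edges of the long paths, and a cycle with pendant paths is attached to the roots of one copy (\cref{fig:Hslm-construction}). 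One cop guards along the cycle and later mirrors, in the first copy, every move the robber makes in the second copy, which forbids the robber the identified middle edges; without those edges his copy is a forest, where the second cop captures him (\cref{lem:Hslm_cop_number_2}). Some device of this kind --- high-degree global structure blown up to degree $3$, plus a mechanism other than tree-likeness to make two classical cops win --- is the idea missing from your proposal.
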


Additionally, we relate the different surrounding versions to each other.
\Cref{eqn:restrictive_smaller} already gives an upper bound for the cop numbers in the restrictive versions in terms of their corresponding nonrestrictive cop numbers.
To complete the picture, our second contribution is to prove several lower and upper bounds for different combinations:

\begin{theorem}
    \label{thm:bounds}
    Each of the following holds (assuming~$G$ to be connected):
    \begin{enumerate}
        \item\label{itm:v_vr} \makebox[4.5cm][l]{\(
            \forall G : c_V(G) \leq \Delta(G) \cdot c_{V, \mathrm{r}}(G)
        \)} \quad and \quad \(
            \exists G : c_V(G) \geq \Delta(G) \cdot c_{V, \mathrm{r}}(G)
        \)

        \item\label{itm:e_er} \makebox[4.5cm][l]{\(
            \forall G : c_E(G) \leq \Delta(G) \cdot c_{E, \mathrm{r}}(G)
        \)} \quad and \quad \(
            \exists G : c_E(G) \geq \Delta(G)/4 \cdot c_{E, \mathrm{r}}(G)
        \)

        \item\label{itm:v_e} \makebox[4.5cm][l]{\(
            \forall G : c_V(G) \leq 2 \cdot c_E(G)
        \)} \quad and \quad \(
            \exists G : c_V(G) \geq 2 \cdot (c_E(G) - 1)
        \)

        \item\label{itm:vr_er} \makebox[4.5cm][l]{\(
            \forall G : c_{V, \mathrm{r}}(G) \leq 2 \cdot c_{E, \mathrm{r}}(G)
        \)} \quad and \quad \(
            \exists G : c_{V, \mathrm{r}}(G) \geq c_{E, \mathrm{r}}(G)
        \)

        \item\label{itm:e_v} \makebox[4.5cm][l]{\(
            \forall G : c_E(G) \leq \Delta(G) \cdot c_V(G)
        \)} \quad and \quad \(
            \exists G : c_E(G) \geq \Delta(G)/12 \cdot c_V(G)
        \)

        \item\label{itm:er_vr} \makebox[4.5cm][l]{\(
            \forall G : c_{E, \mathrm{r}}(G) \leq \Delta(G) \cdot c_{V, \mathrm{r}}(G)
        \)} \quad and \quad \(
            \exists G : c_{E, \mathrm{r}}(G) \geq \Delta(G)/48 \cdot c_{V, \mathrm{r}}(G)
        \)
    \end{enumerate}
\end{theorem}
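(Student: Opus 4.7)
The six parts split into six universal upper bounds and six existential lower bounds. The plan is to treat each upper bound by a simulation strategy that converts a winning strategy in the source model into a winning strategy in the target model at a controlled multiplicative cost, and to treat each lower bound by exhibiting a graph family that saturates the claimed ratio.

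The cleanest case is Part~\ref{itm:v_e}: for $c_V(G) \leq 2 c_E(G)$, I would simulate each edge cop on an edge $uv$ by two vertex cops, one at $u$ and one at $v$. A move of the edge cop from $uv$ to an adjacent edge $vw$ (pivoting at $v$) is simulated in a single turn by moving the vertex cop at $u$ to $v$ and the one at $v$ to $w$, both along actual edges of $G$. When the edge cops surround $v_r$, every edge $v_r u_i$ carries an edge cop, so the simulating vertex cops automatically include one at every neighbor of $v_r$, yielding a vertex-surround. For Part~\ref{itm:vr_er} the same construction works with an extra verification that the restrictive rules are preserved: a simulating vertex cop ending up at $v_r$ corresponds to an edge cop that was already blocking some incident edge in the restrictive edge game, so the additional push-off behavior of the restrictive vertex game only helps the cops.

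For Parts~\ref{itm:v_vr}, \ref{itm:e_er}, \ref{itm:e_v}, and~\ref{itm:er_vr}, the simulation factor is $\Delta(G)$ rather than~$2$. The template is to replace each source cop at a position $p$ by up to $\Delta(G)$ target cops positioned on the local neighborhood of $p$. For $c_E(G) \leq \Delta(G) \cdot c_V(G)$, each vertex cop at $u$ is simulated by one edge cop on every edge incident to $u$, so that when the vertex cops surround $v_r$ each neighbor $u_i$ of $v_r$ has an edge cop on $u_i v_r$, together giving an edge-surround. For the restrictive-versus-non-restrictive pairs, each restrictive cop at $u$ is simulated by non-restrictive cops at every neighbor of $u$ (respectively every edge incident to $u$), so that whenever the restrictive cop is about to push the robber, the non-restrictive simulation already fulfils the surround condition. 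The main obstacle is transitional: when the source cop moves from $p$ to $p'$, the $\Delta$ simulating cops cannot in general all be reassigned in a single target turn (for instance, edge cops on edges incident to $u$ can only reach edges incident to $v$ through the single edge $uv$). I would resolve this by running the source strategy on every other target turn and using the intermediate turn to shuffle the simulating cops through the common pivot, arguing that the simulating cops continue to dominate the old local neighborhood throughout the shuffle so the robber gains no new escape routes.

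The lower bounds require explicit graph families. For the $\Delta$-ratio bounds my candidates are graphs with a high-degree hub whose neighborhood the weaker model must cover one element at a time but which the stronger model can dominate with a single cop, strung together along a path so the robber can migrate whenever a cop closes in; for Part~\ref{itm:v_e}, a candidate is a graph made of many edge-disjoint triangles glued along a path where a single edge cop on the central edge of each triangle covers both endpoints at once while the vertex version needs two cops per triangle. The principal technical hurdle I anticipate is not designing these gadgets but rigorously establishing an adversarial robber strategy that defeats any team below the claimed threshold. I would build such a strategy from a positional invariant of the form ``at the end of every cop turn, the robber has an unthreatened escape neighbour,'' preserved by a pigeonhole argument whenever the team size is strictly below the threshold, and hence preventing a surround from ever occurring.
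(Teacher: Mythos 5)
Your parts \ref{itm:v_e} and \ref{itm:vr_er} (the factor-$2$ simulations of an edge cop by two vertex cops) coincide with the paper's argument and are fine. The problem is with the four $\Delta(G)$-factor upper bounds. You simulate a source cop at $u$ by cops spread over $N(u)$ (respectively over the edges incident to $u$), and you correctly spot that this spread configuration cannot be transported in a single step when the source cop moves from $u$ to $v$ (e.g.\ for a vertex cop at the center of a double star, cops on the edges at $u$ can only reach edges at $v$ through the single edge $uv$). Your proposed repair --- run the source strategy only on every other turn and use the intermediate turns to shuffle through the pivot --- does not close this gap: the robber is not slowed down, so in the induced play he makes two moves per simulated cop move, and a strategy that wins against a unit-speed robber need not win against one that is effectively twice as fast; the claim that the old neighborhood stays dominated during the shuffle is exactly what fails in the double-star example above. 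The paper sidesteps the whole issue by keeping the $\Delta(G)$ simulating cops \emph{collocated} (all on the source cop's vertex, or all on one edge incident to the simulated vertex cop's position), so that the entire group follows each source move in one legal step, and by spreading the group over the neighborhood only once --- at the moment the robber violates a restrictive rule, steps onto the group, or is about to be surrounded. If you adopt that invariant, your parts \ref{itm:v_vr}, \ref{itm:e_er}, \ref{itm:e_v}, \ref{itm:er_vr} go through without any speed change.

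The lower bounds are existential claims with specific constants, and your sketch does not yet contain constructions achieving them. The paper uses the star $K_{\Delta,1}$ and $K_{\Delta,\Delta}$ for parts \ref{itm:v_vr} and \ref{itm:vr_er}, regular graphs with pendant leaves (a leafy $K_{k,k}$, and a double star $K_2$ with leaves) for parts \ref{itm:e_er} and \ref{itm:v_e}, and, crucially, line graphs of complete graphs for parts \ref{itm:e_v} and \ref{itm:er_vr}: there $c_V(L(K_n)) = c_{V,\mathrm{r}}(L(K_n)) = 2(n-2)$, while each edge cop corresponds to a $P_3$ in $K_n$ and an averaging/pigeonhole argument on the number $p(v)$ of cop-$P_3$'s through each vertex $v$ of $K_n$ gives $c_E \geq n(n-2)/3$ and $c_{E,\mathrm{r}} \geq n^2/12$. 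Your hub-chain and triangle-chain gadgets are not shown to produce (and plausibly cannot produce) an edge cop number quadratic in $\Delta$ while the vertex cop number stays linear in $\Delta$, which is what parts \ref{itm:e_v}, \ref{itm:er_vr} and (in the restrictive edge versus edge form) part \ref{itm:e_er} demand; the invariant ``always have an unthreatened escape neighbor'' is the right shape of robber strategy, but making the pigeonhole count work requires the specific structure (disjoint leaf-sets, girth conditions, or the $P_3$ incidence counting) that the paper's families are built to supply.
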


Note that all lower and upper bounds from \cref{thm:bounds} are tight up to a small additive or multiplicative constant.
We prove the upper bounds in \cref{sec:upper_bounds}.
The main idea is the same for all six inequalities:
Given a winning strategy for a set of cops in one version, we can simulate the strategy in any other version.
Afterward, in \cref{sec:lower_bounds}, we consider the lower bounds by constructing explicit families of graphs with the desired surrounding cop numbers.
While some lower bounds already follow from standard graph families (like complete bipartite graphs), others need significantly more involved constructions.
For example, we construct a family of graphs from a set of $k-1$ mutually orthogonal Latin squares (see \cref{sec:latin_squares} for a definition).

\subparagraph{Trivial Bounds.}
Clearly, if the robber is surrounded at a vertex~$v_r$, then there must be at least~$\deg(v_r)$ cops around him in all considered versions.
Therefore, the minimum degree~$\delta(G)$ of~$G$ is an obvious lower bound (stated explicitly for~$c_{V, \mathrm{r}}(G)$ in \cite{Burgess2020_Surround}).
Moreover, if the robber can start at a vertex of \emph{highest} degree and \emph{never move}, i.e., in all but the restrictive vertex version, we get the \emph{maximum degree}~$\Delta(G)$ of~$G$ as a lower bound (stated explicitly in~\cite{Crytser2020_Containment} for~$c_{E, \mathrm{r}}(G)$):

\begin{observation}
    \label{obs:trivial_lower_bounds}
    For every connected graph~$G = (V,E)$, we have
    \begin{itemize}
        \item $c_{V, \mathrm{r}}(G) \geq \delta(G)$ as well as
        \item $c_V(G) \geq \Delta(G)$, $c_E(G) \geq \Delta(G)$ and $c_{E, \mathrm{r}}(G) \geq \Delta(G)$.
    \end{itemize}
\end{observation}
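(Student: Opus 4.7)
The plan is to verify each inequality with an explicit robber strategy, treating the three $\Delta(G)$ bounds uniformly and the degeneracy bound separately.

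For $c_V(G), c_E(G), c_{E,\mathrm{r}}(G) \geq \Delta(G)$ my strategy is the simplest possible: the robber picks a vertex $v$ of maximum degree as his starting position and then never moves. I need to justify that this is legal in each of the three versions. In the (non-restrictive) vertex and edge versions there is no zugzwang, so staying put is always allowed. In the restrictive edge version the only constraint on the robber is that he may not traverse a cop-occupied edge, but he is never forced to leave $v_r$; so staying put is again legal regardless of where the cops move. With the robber permanently at $v$, a surround requires cops simultaneously on each of the $\Delta(G)$ neighbors of $v$ (vertex version) or on each of the $\Delta(G)$ edges incident to $v$ (edge versions). Since these are $\Delta(G)$ pairwise distinct positions and each cop occupies only one position at a time, at least $\Delta(G)$ cops are necessary. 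This gives the three lower bounds at once.

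For $c_{V,\mathrm{r}}(G) \geq d(G)$ the sit-still strategy fails, because a cop stepping onto $v_r$ forces the robber to move. Let $H \subseteq G$ be a subgraph attaining the degeneracy, i.e.\ $\delta(H) = d(G)$, and suppose for contradiction that the cop player uses only $k \leq d(G) - 1$ cops. The robber will maintain the invariant $v_r \in V(H)$. For the initial placement he picks any vertex of $H$ not currently occupied by a cop, which exists because $|V(H)| \geq \delta(H) + 1 = d(G) + 1 > k$. Whenever the robber must move (either voluntarily, or because a cop occupies $v_r$), the number of neighbors of $v_r$ inside $H$ is at least $\delta(H) = d(G) > k$, so some such neighbor is cop-free and the robber moves there, preserving the invariant. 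Thus the robber survives forever. Moreover a surround would require cops on all $\deg_G(v_r) \geq \deg_H(v_r) \geq d(G)$ neighbors in $G$, which is impossible with only $d(G)-1$ cops, so no surround can occur.

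I do not expect any genuine obstacle; the main thing to be careful about is the justification that the robber is never \emph{forced} to leave his chosen vertex in the three non-restrictive vertex / edge / restrictive-edge cases (distinguishing them from the restrictive vertex version, which is exactly why that bound is weaker). Everything else is a direct counting argument using only that a cop occupies a single vertex/edge at a time.
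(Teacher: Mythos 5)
Your proposal is correct and follows essentially the same route as the paper, which justifies the observation in the paragraph preceding it: the robber sits forever on a maximum-degree vertex in every version except the restrictive vertex one (where a surround then needs $\Delta(G)$ distinct cop positions), and for $c_{V,\mathrm{r}}(G)\geq d(G)$ he confines himself to a subgraph $H$ with $\delta(H)=d(G)$, so that any surround would require at least $d(G)$ cops. Your write-up merely fills in the routine details (legality of staying put, existence of a cop-free $H$-neighbor when forced to move) that the paper leaves implicit.
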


\subsection{Related Work}

The game of Cops and Robber was introduced by Nowakowski and Winkler~\cite{Nowakowski1983_VertexToVertex} as well as Quilliot~\cite{Quilliot1978_Jeux} almost forty years ago.
Both consider the case where a single cop chases the robber.
The version with many cops and therefore also the notion of the cop number~$c(G)$ was introduced shortly after by Aigner and Fromme~\cite{Aigner1984_Planar3Cops}, already proving that~$c(G) \leq 3$ for all connected planar graphs~$G$.
Their version is nowadays considered the standard version of the game, and we refer to it as the \emph{classical version} throughout the paper.
The most important open question regarding~$c(G)$ is Meyniel's conjecture, stating that a connected $n$-vertex graph~$G$ has~$c(G) \in O(\sqrt{n})$~\cite{Baird2013_MeynielSurvey,Frankl1987_Meyniel}.
It is known to be \EXPTIME-complete to decide whether~$c(G) \leq k$ (for~$k$ being part of the input)~\cite{Kinnersley2015_EXPTIME}.

By now, countless different versions of the game with their own cop numbers have been considered, see for example these books on the topic~\cite{Bonato2022_Invitation,Bonato2011_TheGameOfCopsAndRobbers}.

The restrictive vertex version was introduced by Burgess et al.~\cite{Burgess2020_Surround}.
They prove bounds for~$c_{V, \mathrm{r}}(G)$ in terms of the clique number~$\omega(G)$, the independence number~$\alpha(G)$ and the treewidth~$\mathrm{tw}(G)$, as well as considering several interesting graph families.
They also show that, for every fixed value of~$k$, it can be decided in polynomial time whether~$c_{V, \mathrm{r}}(G) \leq k$.
The complexity is unknown for~$k$ being part of the input.
Bradshaw and Hosseini~\cite{Bradshaw2019_SurroundingBoundedGenus} extend the study of~$c_{V, \mathrm{r}}(G)$ to graphs of bounded genus, proving (among other results) that~$c_{V, \mathrm{r}}(G) \leq 7$ for every connected planar graph~$G$.
See the bachelor's thesis of Schneider~\cite{Schneider2022_Surrounding} for several further results on~$c_{V, \mathrm{r}}(G)$ (including a version with zugzwang).

The restrictive edge version was introduced even more recently by Crytser, Komarov and Mackey~\cite{Crytser2020_Containment} (under the name \emph{containment variant}).
Besides stating \cref{conj:edge_restrictive_vs_classic}, which is verified for some graphs by \Pralat~\cite{Pralat2015_Containment}, they give several bounds on~$c_{E, \mathrm{r}}(G)$ for different families of graphs.

To the best of our knowledge, $c_V(G)$ and $c_E(G)$ were not considered before.

In light of the (restrictive) vertex and edge versions, one might also define a \emph{face version} for embedded planar graphs.
Here, the cops occupy the faces, and they surround the robber if they occupy all faces incident to~$v_r$.
A restrictive face version could be that the robber must not move along an edge with either one or both incident faces being occupied by a cop.
This version was introduced recently by Ha, Jungeblut and Ueckerdt~\cite{Ha2024_PrimalDual_CGT}.
Despite their similar motivation, the face versions seem to behave differently than the vertex or edge versions.

In each version, one might also add \emph{zugzwang}, i.e., the obligation to actually move during one's turn.
We are not aware of any results about this in the literature.

\subsection{Outline of the Paper}
\Cref{sec:upper_bounds} proves the upper bounds from \cref{thm:bounds}.
Then, in \cref{sec:lower_bounds}, we give constructions implying the corresponding lower bounds.
Finally, in \cref{sec:capturing_is_not_surrounding}, we prove \cref{thm:surrounding_vs_classic}, thereby disproving \cref{conj:edge_restrictive_vs_classic}.

%%%%%%%%%%%%%%%%%%%%%%%%%%%%%%%
%%  U P P E R   B O U N D S  %%
%%%%%%%%%%%%%%%%%%%%%%%%%%%%%%%
\section{Relating the Different Versions}
\label{sec:upper_bounds}

In this \lcnamecref{sec:upper_bounds}, we prove the upper bounds from \cref{thm:bounds}.
The main idea is always that a sufficiently large group of cops in one version can simulate a single cop in another version.
We denote by~$N_G(v)$ and~$N_G[v]$ the open and closed neighborhood of vertex~$v$ in~$G$, respectively.

\begin{proof}[Proof of \cref{thm:bounds} (Upper Bounds)]
    Let~$G$ be an arbitrary but connected graph.
    \begin{enumerate}
        \item $c_V(G) \leq \Delta(G) \cdot c_{V, \mathrm{r}}(G)$:
        Let~$\mathcal{S}_{V, \mathrm{r}}(G)$ be a winning strategy for~$k \in \mathbb{N}$ restrictive vertex cops~$c_1, \ldots, c_k$ in~$G$.
        For~$i \in \{1, \ldots, k\}$, replace~$c_i$ by a group of~$\Delta(G)$ nonrestrictive vertex cops~$C_i := \{c_i^1, \ldots, c_i^{\Delta(G)}\}$.
        Initially, all cops in~$C_i$ start at the same vertex as~$c_i$ and whenever~$c_i$ moves to an adjacent vertex, all cops in~$C_i$ copy its move.

        If the restrictive cops~$c_1, \ldots, c_k$ arrive in a position where they surround the robber, then he is also surrounded by the groups of cops~$C_1, \ldots, C_k$.
        It remains to consider the case that the robber ends their turn on a vertex~$v$, currently occupied by some group~$C_i$ (a move that would be forbidden in the restrictive version).
        Then the cops in~$C_i$ can spread to the up to~$\Delta(G)$ neighbors of~$v$ in~$G$, thereby surrounding the robber.

        \item $c_E(G) \leq \Delta(G) \cdot c_{E, \mathrm{r}}(G)$:
        Let~$\mathcal{S}_{E, \mathrm{r}}(G)$ be a winning strategy for~$k \in \mathbb{N}$ restrictive edge cops~$c_1, \ldots, c_k$ in~$G$.
        We can replace each cop~$c_i$ by a group of~$\Delta(G)$ nonrestrictive edge cops~$C_i := \{c_1^1, \ldots, c_1^{\Delta(G)}\}$.
        Just as above, the cops in~$C_i$ copy the moves of~$c_i$.
        If the~$c_1, \ldots, c_k$ surround the robber, so do the~$C_1, \ldots, C_k$.
        Further, if the robber moves along an edge occupied by~$C_i$ towards a vertex~$v$ then the cops in~$C_i$ can move to the up to~$\Delta(G)$ incident edges in~$G$, thereby surrounding the robber.

        \item $c_V(G) \leq 2 \cdot c_E(G)$:
        Let~$\mathcal{S}_E(G)$ be a winning strategy for~$k \in \mathbb{N}$ edge cops~$c_1, \ldots, c_k$ in~$G$.
        Each edge cop~$c_i$ occupying an edge~$uv$ can be simulated by two vertex cops~$c_i^1$ and~$c_i^2$ occupying the endpoints~$u$ and~$v$ of~$e$.
        If~$c_i$ moves to an adjacent edge~$e'$, $c_i^1$ and~$c_i^2$ can move to the endpoints of~$e'$ in their next move.
        If the robber is surrounded by some of~$c_1, \ldots, c_k$, then $\bigcup_{i = 1}^k \{c_i^1, c_i^2\} \supseteq N_G[v]$.
        Thus, the robber is surrounded.

        \item $c_{V, \mathrm{r}}(G) \leq 2 \cdot c_{E, \mathrm{r}}(G)$:
        This works exactly as in the previous case.
        However, it remains to verify that two restrictive vertex cops~$c_i^1$ and~$c_i^2$ on the two endpoints of an edge~$e$ also simulate the restrictiveness of the edge cop~$c_i$.
        This is the case, as traversing~$e$ would lead the robber onto a vertex currently occupied by a restrictive vertex cop, which is forbidden.

        \item $c_E(G) \leq \Delta(G) \cdot c_V(G)$:
        Let~$\mathcal{S}_V(G)$ be a winning strategy for~$k \in \mathbb{N}$ vertex cops~$c_1, \ldots, c_k$.
        We replace each vertex cop~$c_i$ by a group of~$\Delta(G)$ edge cops $C_i := \{c_i^1, \ldots, c_i^{\Delta(G)}\}$ that initially position themselves all on an arbitrary edge of~$G$ incident to the vertex occupied by~$c_i$.
        Now, if cop~$c_i$ moves along an edge~$e$, then the cops in~$C_i$ all move to~$e$ in their next move.

        When the vertex cops~$c_1, \ldots, c_k$ surround the robber at a vertex~$v$, then for each neighbor~$u$ of~$v$, there is an edge~$e$ incident to~$u$ occupied by a group~$C_i$.
        However, in general, the edge cops do not yet surround the robber.
        If the robber does not move during its next turn, for each edge~$e$ incident to~$v$, at least one edge cop can move there from an edge~$e'$ adjacent to~$e$, thereby surrounding the robber.
        Otherwise, if the robber moves to a neighbor~$u$ of~$v$, then the~$\Delta(G)$ edge cops on some edge incident to~$u$ can spread to all edges incident to~$u$ in their next turn, thereby surrounding the robber.

        \item $C_{E, \mathrm{r}}(G) \leq \Delta(G) \cdot c_{V, \mathrm{r}}(G)$:
        Let~$\mathcal{S}_{V, \mathrm{r}}$ be a winning strategy for~$k \in \mathbb{N}$ restrictive vertex cops.
        They can be simulated by~$k$ groups of~$\Delta(G)$ restrictive edge cops each, just as in the previous case.
        However, it remains to prove that the robber also loses if he moves onto a vertex~$v$ that would be occupied by a simulated restrictive vertex cop (a move that would be forbidden in~$\mathcal{S}_{V, \mathrm{r}}$).
        In this case, there is a group of~$\Delta(G)$ edge cops on some edge incident to~$v$ and they can spread to all edges incident to~$v$ in their next turn, thereby surrounding the robber.
        \qedhere
    \end{enumerate}
\end{proof}

\begin{corollary}
    For every graph~$G$, the surrounding cop numbers $c_V(G)$, $c_E(G)$, $c_{V, \mathrm{r}}(G)$ and~$c_{E, \mathrm{r}}(G)$ are always within a factor of~$2\Delta(G)$ of each other.
\end{corollary}

\begin{proof}
    In each of the six upper bounds stated in \cref{thm:bounds}, the number of cops increases by at most a factor of~$\Delta(G)$.
    In all cases, this is obtained by simulating a winning strategy of one surrounding variant by (groups of) cops in another variant.
    The only cases where two such simulations need to be combined is when changing both, the cop type (vertex cops/edge cops) and the restrictiveness.
    It is easy to check that in all but one combination the number of cops increases by at most a factor of~$2\Delta(G)$.
    The only exception is when a winning strategy for restrictive vertex cops is simulated by nonrestrictive edge cops, where the number of cops would increase by a factor of~$\Delta(G)^2$.
    However, looking at the proof of \cref{thm:bounds}, we can see that both simulations replace a single cop by a group of~$\Delta(G)$ cops.
    In this particular case, it suffices to do this replacement just once.
\end{proof}

We remark that all upper bounds in \cref{thm:bounds}
result from simulating a winning strategy of another surrounding version.
In the next \lcnamecref{sec:lower_bounds} we show that, surprisingly, these are indeed (asymptotically) tight.
After all, it would seem natural that every version comes with its own unique winning strategy (more involved than just simulating one from a different version).

\section{Explicit Graphs and Constructions}
\label{sec:lower_bounds}

In this \lcnamecref{sec:lower_bounds}, we shall mention or construct several families of graphs with some extremal behavior for their corresponding classical and surrounding cop numbers.
Together, these graphs prove all lower bounds stated in \cref{thm:bounds}.

\subsection{Complete Bipartite Graphs}

We start by considering complete bipartite graphs.
By choosing the right parameters, they already serve to prove two of the lower bounds from \cref{thm:bounds}.
Furthermore, they appear again as a building block for slightly more complicated graphs in \cref{sec:leafy_graphs}.

\begin{proposition}
    \label{prop:complete_bipartite_graphs}
    For a complete bipartite graph~$G$, we have $c(G) = \min\{2, \delta(G)\}$, $c_{V, \mathrm{r}}(G) = \delta(G)$ and $c_V(G) = c_{E, \mathrm{r}}(G) = c_E(G) = \Delta(G)$.
\end{proposition}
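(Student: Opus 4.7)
Let $A=\{u_1,\ldots,u_m\}$ and $B=\{v_1,\ldots,v_n\}$ be the two parts of $G=K_{m,n}$, chosen without loss of generality with $m\leq n$, so that $\delta(G)=m$ and $\Delta(G)=n$. The plan is to prove each equality by exhibiting an explicit cop strategy matching a lower bound drawn from \cref{obs:trivial_lower_bounds} or from a standard dismantlability argument.

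I would dispose of the three easy cop numbers first. For $c(G)$: when $m=1$ a single cop at the center dominates $V(G)$, while for $m\geq 2$ one cop per part suffices because after placement the cop opposite the robber is already adjacent to him; the matching lower bound $c(G)\geq 2$ for $m\geq 2$ follows because no vertex $v$ of $K_{m,n}$ satisfies $N_G[v]\subseteq N_G[w]$ for some $w\neq v$, so $G$ is not dismantlable. For $c_{V,\mathrm{r}}(G)$: \cref{obs:trivial_lower_bounds} gives $d(G)\geq\delta(G)=m$, and I would match this by placing cops on all of $A$; the robber is barred from $A$, so he must sit at some $v_j\in B$ and is immediately surrounded since $N_G(v_j)=A$. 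For $c_V(G)$: the lower bound is $\Delta(G)=n$ from \cref{obs:trivial_lower_bounds}, and I would place all $n$ cops on $B$ so that a robber placed in $A$ is surrounded at once, while a robber placed at some $v_j\in B$ is surrounded after a first cop turn in which $m$ of the cops move from $B$ onto $A$.

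The main obstacle is the edge version, where I need $c_E(G)\leq n$ (which, via \cref{eqn:restrictive_smaller}, also gives $c_{E,\mathrm{r}}(G)\leq n$). The subtlety is that in $K_{m,n}$ the edges $u_1v_k$ and $u_iv_j$ are non-adjacent whenever $i\neq 1$ and $k\neq j$, so from a star of edge-cops centered at a vertex of $A$ only one edge incident to any fixed $v\in B$ is reachable in a single cop turn; one turn therefore cannot surround a robber placed at a vertex of $B$. My plan starts with all $n$ cops on the star $\{u_1v_k:1\leq k\leq n\}$. A robber at $u_1$ is already surrounded, and a robber at some $u_i\in A\setminus\{u_1\}$ is surrounded on the first cop turn by moving each cop $u_1v_k\to u_iv_k$ along the shared endpoint $v_k$.

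For the delicate case of a robber at some $v_j\in B$ I would use a two-turn plan. On the first cop turn, fix distinct indices $k_2,\ldots,k_m\in\{1,\ldots,n\}\setminus\{j\}$ and send the cop on $u_1v_{k_i}$ to $u_iv_{k_i}$ for each $i=2,\ldots,m$, leaving the cop on $u_1v_j$ and all remaining star cops in place. After this move every $u_i\in A$ has a cop on some incident edge, and I would verify that each of the three responses available to the robber (move to $u_1$, move to some $u_i$ with $i\geq 2$, or stay at $v_j$) is countered on the second cop turn by a set of moves each sharing exactly one endpoint with the corresponding cop's current edge, completing the surround. The same placement and strategy transfer verbatim to the restrictive edge version, since the cops' strategy never depends on the robber's ability to cross an occupied edge.
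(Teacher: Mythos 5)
Your proposal is correct, and for the classical, restrictive-vertex and non-restrictive-vertex parts it coincides with the paper's proof (same placements on the smaller/larger class; same matching lower bounds, except that you derive $c(G)\geq 2$ from non-dismantlability instead of the paper's explicit robber strategy of staying in the cop's class on a different vertex -- both are fine). Where you genuinely diverge is the edge version: the paper places the $\Delta(G)$ edge cops so that \emph{every} vertex of \emph{both} classes has an occupied incident edge (e.g.\ one cop per vertex of the larger class, chosen so that all vertices of the smaller class are also touched), after which any robber placement is surrounded in a single cop turn, since the edges at a vertex of $A$ are reached via their $B$-endpoints and the edges at a vertex of $B$ via their $A$-endpoints. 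You instead start from the star at $u_1$ and need a two-turn repositioning for a robber on $B$; your case analysis (stay at $v_j$, move to $u_1$, move to $u_i$ with $i\geq 2$) does check out, each second-turn move sharing an endpoint with the cop's current edge, and by \cref{eqn:restrictive_smaller} this also covers $c_{E,\mathrm{r}}$, so the argument is sound -- it is just slightly longer than the paper's one-move surround. One small inaccuracy in your motivating remark: from the star at $u_1$, \emph{two} edges incident to a fixed $v_j\in B$ are reachable in one cop turn (the cop on $u_1v_j$ can slide to any $u_iv_j$, and a cop on $u_1v_k$ can take over $u_1v_j$), so for $m\leq 2$ a one-turn surround from the star is in fact possible; this does not affect your strategy, which works for all $m$, but the justification for why two turns are needed should be stated only for $m\geq 3$.
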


\begin{proof}
    In the classical version, it is enough to place one cop in each bipartition class.
    Wherever the robber is, the cop from the other bipartition class can capture him in his next turn, proving~$c(G) \leq 2$.
    Further, if~$\delta(G) = 1$, then a single cop suffices:
    By starting on the single vertex in one bipartition class, the robber is forced to position himself in the other.
    There, he gets captured in the cops' next turn.
    This proves~$c(G) \leq \delta(G)$.
    On the other hand, a single cop is not enough in case that~$\delta(G) \geq 2$:
    The robber can always stay in the same bipartition class as the cop (but on a different vertex).
    We conclude that~$c(G) \geq 2$ in this case, and therefore $c(G) = \min\{2, \delta(G)\}$.

    The lower bounds of~$\delta(G)$ and~$\Delta(G)$ for all surrounding versions follow from \cref{obs:trivial_lower_bounds}.
    Therefore, it remains to show upper bounds below.

    A group of~$\delta(G)$ restrictive vertex cops can initially occupy all vertices of the smaller bipartition class.
    This forces the robber to position himself on a vertex in the larger bipartition class, where he is surrounded immediately.

    A winning strategy for~$\Delta(G)$ nonrestrictive vertex cops is to initially occupy all vertices from the larger bipartition class.
    If the robber positions himself in the smaller bipartition class, he is surrounded immediately, so he has to position himself on one of the cops.
    Then in the cops' next turn, any subset of~$\delta(G)$ cops can occupy the smaller bipartition class, thereby surrounding the robber.

    In the restrictive and nonrestrictive edge versions, we let~$\Delta(G)$ cops position themselves such that, in both bipartition classes, each vertex has at least one incident edge occupied by a cop.
    Then, no matter where the robber positions himself, he gets surrounded by the cops within their next turn.
\end{proof}

Let us consider two special cases of \cref{prop:complete_bipartite_graphs} for all~$\Delta \in \mathbb{N}$:
First, the star~$K_{\Delta,1}$ has $c_{V, \mathrm{r}}(K_{\Delta,1}) = 1$ while $c_V(K_{\Delta,1}) = \Delta$, thus proving the lower bound in \cref{itm:v_vr} of \cref{thm:bounds}.
Second, the complete bipartite graph~$K_{\Delta,\Delta}$ has~$c_{V, \mathrm{r}}(K_{\Delta,\Delta}) = c_{E, \mathrm{r}}(K_{\Delta,\Delta}) = \Delta$, thus proving the lower bound in \cref{itm:vr_er} of \cref{thm:bounds}.

\subsection{Regular Graphs with Leaves}
\label{sec:leafy_graphs}

Our first construction takes a connected $k$-regular graph~$H$ and attaches a set of~$\ell$ new degree-$1$-vertices (\emph{leaves}) to each vertex.
Depending on~$H$,~$k$ and~$\ell$ we can give several bounds on the surrounding cop numbers of the resulting graph.

\begin{lemma}
    \label{lem:leafy_graphs}
    Let~$H = (V_H, E_H)$ be a $k$-regular connected graph and let $G = (V_G, E_G)$ be the graph obtained from~$H$ by attaching to each vertex~$v \in V_H$ a set of~$\ell$ new leaves for some~$\ell \geq 0$.
    Then each of the following holds:
    \begin{enumerate}
        \item\label{itm:lower_bound_leafy_vertex} \(
            c_V(G) \geq
            \begin{cases*}
                k(k+\ell-1) & if $\girth(H) \geq 7$ \\
                (k+1)\ell & always
            \end{cases*}
        \)
        \item $c_{V, \mathrm{r}}(G) = \max\{c_{V, \mathrm{r}}(H), k + 1\}$
        \item \(
            c_E(G) \geq
            \begin{cases*}
                k(k+\ell-1) & if $\girth(H) \geq 6$ \\
                k\ell & if $\girth(H) \geq 4$ \\
                \frac{1}{2} (k+1)\ell & always
            \end{cases*}
        \)
        \item $c_{E, \mathrm{r}}(G) = \max\{c_{E, \mathrm{r}}(H), k+\ell\}$
    \end{enumerate}
\end{lemma}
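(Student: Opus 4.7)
The plan is to handle each of the four parts by constructing a robber survival strategy for the lower bound and, in parts 2 and 4, a matching cop strategy for the upper bound. The underlying observation is that a robber on a leaf $w$ attached to $v$ has $v$ as its unique neighbor, so a single cop at $v$ surrounds him; the robber therefore wants to remain in $V_H$ throughout.

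For the non-restrictive lower bounds (parts 1 and 3), the key tool is a pigeonhole argument whose disjointness is ensured by the girth hypothesis. For the bound $c_V(G) \geq k(k+\ell-1)$ with $\girth(H) \geq 7$, I consider the robber at $v \in V_H$ with $H$-neighbors $u_1, \ldots, u_k$. For each $u_i$, the set $S_i := \bigcup_{y \in N_G(u_i) \setminus \{v\}} N_G[y]$ is a \emph{serve set}: only cops inside $S_i$ can be moved to cover a non-$v$ neighbor of $u_i$ one step later, so at least $k+\ell-1$ cops in $S_i$ are necessary to surround $u_i$. The girth assumption forces the $S_i$ to be pairwise disjoint, since any common vertex of $S_i$ and $S_j$ (for $i \neq j$) together with the path $u_i, v, u_j$ would create a cycle of length at most $6$. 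With fewer than $k(k+\ell-1)$ cops some $S_i$ contains fewer than $k+\ell-1$ cops, and the robber escapes to the corresponding $u_i$ and then repeats the argument from $u_i$. The always-bound $(k+1)\ell$ is obtained analogously: a cop ready to cover a leaf of $y \in \{v, u_1, \ldots, u_k\}$ must currently sit on $y$ or on that specific leaf, giving $k+1$ pairwise disjoint leaf-relevant pools each of size $\ell$. The three edge bounds in part 3 follow by the same template applied to incident edges instead of neighborhoods, with the relaxed girth thresholds $6$ and $4$ reflecting the fact that edges have two endpoints.

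For parts 2 and 4 the lower bounds $k+1$ and $k+\ell$ follow from the same degree-pigeonhole idea: with only $k$ restrictive vertex cops the robber stays at any $v \in V_H$ because his $k+\ell$ neighbors cannot all be covered, and even if a cop moves onto $v$ he has at least $\ell+1$ unoccupied neighbors in $V_H$ to escape to; the edge analogue with $k+\ell-1$ restrictive edge cops is identical. The refinements $c_{V,\mathrm{r}}(G) \geq c_{V,\mathrm{r}}(H)$ and $c_{E,\mathrm{r}}(G) \geq c_{E,\mathrm{r}}(H)$ are obtained by projecting any $G$-strategy onto an $H$-strategy of equal size, collapsing cops on leaves back to their attached $H$-vertex or an incident $H$-edge. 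For the upper bound direction I would execute the winning $H$-strategy using $c_{V,\mathrm{r}}(H)$ (resp.\ $c_{E,\mathrm{r}}(H)$) cops, treating a robber on a leaf of $v$ as virtually at $v$; once that strategy reaches an $H$-surround of the virtual vertex $v$, the one extra vertex cop (resp.\ the $\ell$ extra edge cops) that the $\max$ reserves steps onto $v$ (resp.\ the $\ell$ leaf edges of $v$). In the vertex case this forces the real robber, if still at $v$, onto a leaf of $v$ where he is immediately surrounded; if he is already on a leaf he is surrounded at once. The edge case is analogous with the $\ell$ extras directly completing the surround of the cornered vertex.

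The main obstacle will be coordinating the primary $H$-strategy with the extra cops throughout the restrictive upper bound proofs. The extra cops must already be adjacent to (or on edges incident to) the virtual vertex $v$ at the moment the $H$-strategy reports its surround, and keeping them there requires a shadowing subroutine that maintains bounded distance from the robber's virtual position without disturbing the primary strategy. Showing that a single vertex cop (part 2) or $\ell$ edge cops (part 4) suffice for this shadowing, and that no escape window is ever opened for the robber during the handoff, is the delicate technical step that underlies the tightness of the bounds $k+1$ and $k+\ell$.
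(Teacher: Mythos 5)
Your lower-bound arguments coincide with the paper's: the \enquote{safe vertex} pigeonhole with girth-forced disjointness of the sets $S_i=N_G[N_G(u_i)\setminus\{v\}]$ is exactly the paper's argument (with the same $6$-cycle contradiction), the $(k+1)\ell$ count via the disjoint leaf pools is identical, the edge bounds in item~3 are indeed the same template (though you give no details there, so the $\tfrac12$-factor bookkeeping and the role of the edge $v_rv$ remain unchecked in your sketch), and the inequalities $c_{V,\mathrm{r}}(G)\geq c_{V,\mathrm{r}}(H)$, $c_{E,\mathrm{r}}(G)\geq c_{E,\mathrm{r}}(H)$ via projecting cops on leaves back to their attachment vertex or an incident $H$-edge is what the paper compresses into \enquote{the leaves do not help the cops}. (Minor slip: with $k$ restrictive vertex cops, a robber at $v\in V_H$ who is forced off $v$ has at least \emph{one} free neighbor in $V_H$, not $\ell+1$ free neighbors in $V_H$; he must take an $H$-neighbor, since stepping onto a leaf while a cop sits on $v$ is immediately fatal.)

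The genuine gap is in the upper bounds of items 2 and 4, which you leave open by postulating a \enquote{shadowing subroutine} keeping the extra cops near the robber's virtual position so that they are adjacent at the moment the $H$-strategy completes its surround. No such coordination is needed, and this is precisely the point that makes the bound work: the trap produced at the end of the first phase is \emph{stable} under the restrictive rules. When the simulated $H$-strategy (treating a robber on a leaf of $v$ as sitting on $v$) terminates, $k$ cops occupy all $H$-neighbors of $v$ (respectively all $k$ edges of $E_H$ at $v$), and they simply stay put; since the robber may not move onto an occupied vertex (respectively traverse an occupied edge), he is confined to $v$ and its attached leaves for the rest of the game. Only these $k$ cops are pinned, so at least $\max\{c_{V,\mathrm{r}}(H),k+1\}-k\geq 1$ (respectively $\geq\ell$) cops are free and may walk over during \emph{arbitrarily many} turns: once the extra vertex cop reaches $v$, a robber on a leaf is surrounded, and a robber on $v$ is forced off (restrictive rule) onto a leaf, where the cop on $v$ surrounds him; in the edge case, once the $\ell$ extra cops occupy the leaf edges at $v$, the robber is surrounded whether he sits on $v$ or on a leaf. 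Without this observation your proposal does not establish $c_{V,\mathrm{r}}(G)\leq\max\{c_{V,\mathrm{r}}(H),k+1\}$ or $c_{E,\mathrm{r}}(G)\leq\max\{c_{E,\mathrm{r}}(H),k+\ell\}$, i.e.\ the two equalities of the lemma remain unproved in one direction.
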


\begin{proof}
    Note that most claimed inequalities hold trivially for the case that~$\ell = 0$ (many lower bounds become~$0$, while others follow from~$G = H$ in this case).
    Only the two cases requiring~$\girth(H) \geq 6$, respectively~$\girth(H) \geq 7$, are not directly clear.
    However, their proofs below hold for~$\ell = 0$ as well.
    In all other cases, we implicitly assume~$\ell \geq 1$ to avoid having to handle additional corner cases.

    \begin{enumerate}
        \item To prove the lower bounds on~$c_V(G)$, consider any configuration of cops on the vertices of~$G$.
        For a vertex $v \in V_H$ of~$G$, let~$A_v$ be the set consisting of~$v$ and all leaves that are attached to it, i.e., $A_v = \{v\} \cup (N_G[v] \setminus V_H)$.
        We call a vertex $v \in V_H$ \emph{safe} if there are fewer than~$\ell$ cops on~$A_v$ in~$G$.
        Note that if the robber ends his turn on a safe vertex, then the cops cannot surround him in their next turn.
        Let~$v_r$ be the current position of the robber.
        If the total number of cops is less than~$(k+1)\ell$, then at least one of the~$k+1$ vertices in the closed neighborhood~$N_H[v_r]$ of~$v_r$ is safe, as no cop can be in~$A_v$ and~$A_w$ for~$v \neq w$.
        Thus, the robber always has a safe vertex to move to (or to remain on), giving him a strategy to avoid being surrounded.
        It follows that $c_V(G) \geq (k+1)\ell$.

        Now, if~$\girth(H) \geq 7$ and the robber is on~$v_r \in V_H$, then we consider for each neighbor~$v$ of~$v_r$ in~$V_H$ additionally the set~$B_v = N_G[N_G(v) \setminus \{v_r\}]$, i.e., all vertices~$w$ with~$\dist(w, v) \leq 2$ except from~$N_G[v_r] \setminus \{v\}$.
        Since $\girth(H) \geq 7$, we have that~$B_v \cap B_w = \emptyset$ for distinct~$v, w \in N_H(v_r)$.
        Similar to above, we call~$v \in N_H(v_r)$ \emph{safe} if~$B_v$ contains fewer than~$k+\ell-1$ cops.
        Again, if the robber ends his turn on a safe vertex, the cops cannot surround him in their next turn.
        If the total number of cops is less than~$k(k+\ell-1)$, then at least one of the~$k$ neighbors of~$v_r$ in~$H$ is safe.
        This would give the robber a strategy to avoid being surrounded.
        It follows that~$c_V(G) \geq k(k+\ell-1)$ in the case that~$\girth(H) \geq 7$.

        \item To prove that $c_{V, \mathrm{r}}(G) \geq \max\{c_{V, \mathrm{r}}(H), k + 1\}$, first note that $c_{V, \mathrm{r}}(G) \geq c_{V, \mathrm{r}}(H)$ because the leaves do not help the cops.
        Further, $c_{V, \mathrm{r}}(G) \geq k + 1$ holds because the robber versus~$k$ cops can easily always stay on the subgraph~$H$ of~$G$ where it is impossible to surround him (in~$G$) with~$k$ cops only (note that~$G$ has at least~$k + 1$ vertices).

        For the upper bound $c_{V, \mathrm{r}}(G) \leq \max\{c_{V, \mathrm{r}}(H), k+1\}$, we describe a two-phase winning strategy for the cops.
        In the first phase, let~$c_{V, \mathrm{r}}(H)$ cops execute an optimal strategy for surrounding the robber on the subgraph~$H$ of~$G$.
        If during that phase the robber leaves~$H$, i.e., steps on a leaf attached to some $v \in V_H$, act as if the robber stays on~$v$.
        Hence, after finitely many rounds, the robber is on a vertex in the set $N_v = \{v\} \cup (N_G(v) - V_H)$ for some~$v \in V_H$, and~$k$ of the~$c_{V, \mathrm{r}}(H)$ cops occupy all~$k$ neighbors of~$v$ in~$V_H$.
        As we are in the restrictive variant, the robber may not leave~$N_v$.
        Now, in the second phase, let one of the remaining $\max\{c_{V, \mathrm{r}}(H), k+1\} - k \geq 1$ cops go to vertex~$v$.
        This forces the robber to go to an attached leaf and be surrounded there.

        \item To prove the three lower bounds on~$c_E(G)$, consider any configuration of cops on the edges of~$G$.
        For a vertex $v \in V_H$ of~$G$, let~$E_v$ be the set of all edges in~$G$ incident to~$v$.
        We call vertex $v \in V_H$ \emph{safe} if there are fewer than~$\ell$ cops in total on~$E_v$ in~$G$.
        Note that if the robber ends his turn on a safe vertex, then the cops can not surround him in their next turn.
        Let the robber be at vertex~$v_r \in V_H$.
        If all vertices~$v \in N_H[v_r]$ were unsafe, then each such~$v$ has at least~$\ell$ cops in~$E_v$.
        Further, each cop is on an edge incident to at most two vertices in~$N_H[v_r]$.
        Thus, at least $\frac{1}{2}(k+1)\ell$ cops are required to make all vertices in~$N_H[v_r]$ unsafe, i.e., for fewer than $\frac{1}{2}(k+1)\ell$ cops there is always a safe vertex for the robber to move onto.

        If we further assume~$\girth(H) \geq 4$, then no two vertices in~$N_H(v_r)$ are connected by an edge, so the~$k$ vertices in~$N_H(v_r)$ each require their own~$\ell$ cops on incident edges to be unsafe.
        In other words, fewer than~$k\ell$ cops always guarantee the robber a safe vertex to move onto.

        Now we even assume that~$\girth(G) \geq 6$.
        Similar to the vertex version, we define for each~$v \in N_H(v_r)$ a set~$B_v$ containing all edges incident to a vertex in~$N_G[v]$ that do not have~$v_r$ as an endpoint.
        We say that~$v$ is \emph{safe} if there are at most~$k+\ell-1$ cops in~$B_v$.
        From~$\girth(H) \geq 6$ it follows that~$B_v \cap B_w = \emptyset$ for distinct neighbors~$v, w$ of~$v_r$.
        Thus, for all~$v \in N_H(v_r)$ to be unsafe, we would need at least~$k(k+\ell-1)$ cops.

        \item To prove that~$c_{E, \mathrm{r}}(G) \geq \max\{c_{E, \mathrm{r}}(H), k + \ell\}$, first note that~$c_{E, \mathrm{r}}(G) \geq c_{E, \mathrm{r}}(H)$ because the leaves do not help the cops.
        Further, $c_{E, \mathrm{r}}(G) \geq k + \ell = \Delta(G)$ holds by \cref{obs:trivial_lower_bounds}.

        Finally, the upper bound $c_{E, \mathrm{r}}(G) \leq \max\{c_{E, \mathrm{r}}(H), k+\ell\}$ for the restrictive edge version follows mostly along the lines of the restrictive vertex version above.
        In a first phase, we use $c_{E, \mathrm{r}}(H)$ cops to force the robber onto a vertex~$v$ or a leaf attached at~$v$, while each of the~$k$ incident edges at~$v$ in~$E_H$ is occupied by a cop.
        In the second phase, let~$\ell$ of the remaining~$\max\{c_{E, \mathrm{r}}(H), k+\ell\} - k \geq \ell$ cops go to occupy the remaining~$\ell$ incident edges at~$v$ (the ones that are not in~$E_H$).
        This surrounds the robber, either on~$v$ or an attached leaf.
        \qedhere
    \end{enumerate}
\end{proof}

Applied to different host graphs~$H$, \cref{lem:leafy_graphs} yields several interesting bounds, summarized in the following corollaries:
\Cref{cor:leafy_complete_bipartite} proves the lower bound in \cref{itm:e_er} for even~$\Delta$, and \cref{cor:leafy_edge} proves the lower bound in \cref{itm:v_e}.

\begin{corollary}
    \label{cor:leafy_complete_bipartite}
    For every~$\Delta \geq 2$ there is a connected graph~$G$ with~$\Delta(G) = \Delta$ such that \(
        c_{V, \mathrm{r}}(G) =
        \bigl\lfloor \frac{\Delta}{2} \bigr\rfloor + 1
    \),  \(
        c_V(G) =
        \bigl(\bigl\lfloor \frac{\Delta}{2} \bigr\rfloor + 1\bigr)
        \bigl\lceil \frac{\Delta}{2} \bigr\rceil
    \), \(
        c_{E, \mathrm{r}}(G) = \Delta(G)
    \) and \(
        c_E(G) =
        \bigl\lfloor \frac{\Delta}{2} \bigr\rfloor
        \bigl\lceil \frac{\Delta}{2} \bigr\rceil
    \).
\end{corollary}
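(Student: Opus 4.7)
I would take $G$ to be the graph obtained from $H = K_{k,k}$, with $k := \lfloor \Delta/2 \rfloor$, by attaching $\ell := \lceil \Delta/2 \rceil$ new leaves to each vertex, so that $\Delta(G) = k + \ell = \Delta$. Since \cref{prop:complete_bipartite_graphs} gives $c_{V, \mathrm{r}}(K_{k,k}) = c_{E, \mathrm{r}}(K_{k,k}) = k$, applying \cref{lem:leafy_graphs} immediately yields $c_{V, \mathrm{r}}(G) = \max\{k, k+1\} = \lfloor \Delta/2 \rfloor + 1$ and $c_{E, \mathrm{r}}(G) = \max\{k, k + \ell\} = \Delta$, together with the lower bounds $c_V(G) \geq (k+1)\ell$ and (using $\girth(K_{k,k}) = 4$) $c_E(G) \geq k\ell$. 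Everything thus reduces to exhibiting explicit cop strategies witnessing the matching upper bounds $c_V(G) \leq (k+1)\ell$ and $c_E(G) \leq k\ell$.

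For the vertex upper bound, denote the two bipartition classes of $K_{k,k}$ by $A$ and $B$ and place $\ell$ cops on every vertex of $A$ plus $\ell$ additional \emph{reserve} cops stacked on a fixed $a_1 \in A$, totalling $(k+1)\ell$ cops. Two features of $K_{k,k}$ drive the strategy: every $A$-vertex is $H$-adjacent to every $B$-vertex, so the cops distributed over $A$ can cover all of $B$ in a single step by sending one cop per $A$-vertex to a different $B$-vertex, and the $\ell$ cops at any $a_i$ can simultaneously fan out to its $\ell$ attached leaves. A short case analysis on the robber's placement (a leaf, an $A$-vertex, or a $B$-vertex) then shows that he is either surrounded immediately or, after the reserve is sent from $a_1$ to whichever $B$-vertex currently hosts the robber, surrounded in the following round regardless of whether he stays, moves to an attached leaf, or retreats to some $A$-vertex.

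For the edge upper bound, fix a perfect matching $M = \{a_i b_{\sigma(i)} : i \in [k]\}$ of $K_{k,k}$ and place $\ell - k + 1$ edge cops on each matching edge plus a single cop on each of the $k^2 - k$ remaining $K_{k,k}$-edges, for a total of $k(\ell - k + 1) + (k^2 - k) = k\ell$ cops. Whenever the robber stops at a vertex $v \in V_H$, a single coordinated cops' turn surrounds him: the $\ell - k + 1$ cops on $v$'s matching edge disperse along $v$ onto $\ell - k + 1$ of its leaves; one cop from each of the other $k - 1$ $K_{k,k}$-edges at $v$ slides along $v$ to cover one more leaf each; one cop from each matching edge not incident to $v$ slides along the shared $B$-endpoint (or $A$-endpoint) to cover the corresponding $K_{k,k}$-edge at $v$; and a cop from a suitably chosen non-matching edge incident to $v$'s matched partner covers the final $K_{k,k}$-edge at $v$. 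Leaf positions are dispatched trivially by sliding one cop from the attached $K_{k,k}$-edge onto the leaf-edge. The main obstacle will be verifying this edge case analysis, since edge cops can only move to edges sharing an endpoint; the matching placement is tailored precisely so that every incident edge of $v$ has a ready sibling cop on an adjacent edge to slide in.
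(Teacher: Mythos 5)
Your proposal is correct and follows essentially the same route as the paper: the same leafy $K_{k,k}$ construction with $k=\lfloor\Delta/2\rfloor$, $\ell=\lceil\Delta/2\rceil$, the same use of \cref{lem:leafy_graphs} and \cref{prop:complete_bipartite_graphs} for $c_{V,\mathrm{r}}$, $c_{E,\mathrm{r}}$ and the two lower bounds, and explicit cop strategies for the remaining upper bounds that are only cosmetic variants of the paper's (the paper spreads the final $\ell$ vertex cops over $B$ instead of stacking a reserve on $a_1$, and its edge-cop placement coincides with yours since $\ell-k\in\{0,1\}$). The only caveat, shared with the paper's own proof, is that your edge-cop surround needs $k\geq 2$; for $k=1$ (i.e.\ $\Delta\in\{2,3\}$) the claimed value $k\ell<\Delta$ already contradicts \cref{obs:trivial_lower_bounds}, so the $c_E$ formula can only hold for $\Delta\geq 4$.
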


\begin{proof}
    Let~$k = \bigl\lfloor \frac{\Delta}{2} \bigr\rfloor$, let~$H = K_{k,k}$ be the $k$-regular complete bipartite graph and choose $\ell = \bigl\lceil \frac{\Delta}{2} \bigr\rceil$.
    We consider the graph~$G$ obtained from~$H$ by attaching~$\ell$ new leaves to each vertex.
    Note that~$G$ has maximum degree~$\Delta(G) = k + \ell = \Delta$.
    \begin{itemize}
        \item \Cref{lem:leafy_graphs} yields that~$c_{V, \mathrm{r}}(G) = \max\{c_{V, \mathrm{r}}(H), k + 1\}$.
        From \cref{prop:complete_bipartite_graphs}, we further get that~$c_{V, \mathrm{r}}(H) = k$, so actually~$c_{V, \mathrm{r}}(G) = k + 1 = \bigl\lfloor \frac{\Delta}{2} \bigr\rfloor + 1$.

        \item \Cref{lem:leafy_graphs} gives~$c_V(G) \geq (k + 1)\ell$.
        It remains to show that $(k+1)\ell$ nonrestrictive vertex cops can always surround the robber, i.e., that $c_V(G) \leq (k+1)\ell$.
        We give a winning strategy for~$(k+1)\ell$ cops.
        Let~$A$ and~$B$ be the two bipartition classes of~$H$.
        Initially, we place~$\ell$ cops on each of the~$k$ vertices in~$A$ and the remaining~$\ell$ cops such that each~$b \in B$ is occupied by at least one cop (this is always possible as~$\ell \geq k$).
        Now, if the robber starts at a leaf, he is surrounded immediately.
        If he starts at~$a \in A$, then the~$\ell$ cops on~$a$ spread across all adjacent leaves, thereby surrounding the robber.
        Thus, the robber has to start at~$b \in B$.
        If he stays on~$b$ or an adjacent leaf, he can be surrounded by the~$\ell$ cops currently on~$B$ in a finite number of turns.
        Thus, he has to move to~$a \in A$ at some point.
        Then~$\ell$ of the cops on~$a$ move to the adjacent leaves and all other cops currently on some vertices in~$A$ move to~$B$ in a way such that all vertices of~$B$ are occupied.
        (If~$k \geq 2$, the~$\ell$ cops on some~$a' \in A \setminus \{a\}$ can already occupy all~$b \in B$.
        Otherwise, if~$k = 1$, then the~$\ell$ cops not on leaves adjacent to~$a$ can always reach the unique~$b \in B$.)
        Then the robber is surrounded.

        \item By \cref{lem:leafy_graphs}, we get that~$c_{E, \mathrm{r}}(G) = \max\{c_{E, \mathrm{r}}(H), k + \ell\}$, and it follows from \cref{prop:complete_bipartite_graphs} that~$c_{E, \mathrm{r}}(H) = k + \ell$.
        Thus, $c_{E, \mathrm{r}}(G) = k + \ell = \Delta$ holds.

        \item Considering~$c_E(G)$, we apply \cref{lem:leafy_graphs} and get~$c_E(G) \geq k\ell = \bigl\lfloor \frac{\Delta}{2} \bigr\rfloor \bigl\lceil \frac{\Delta}{2} \bigr\rceil$ (using that $\girth(H) \geq 4$, as~$H$ is a bipartite graph).
        It remains to show that~$k\ell$ nonrestrictive edge cops suffice.
        To this end, let~$k^2$ cops initially occupy all edges of~$H$, one cop per edge.
        For odd~$\Delta$, let~$M$ be a perfect matching in~$H$ and let each of the~$k$ remaining cops occupy a single edge from~$M$.
        Now if the robber starts on a leaf~$w$ adjacent to a vertex~$v$ of~$H$, he gets surrounded by one cop moving from an edge of~$H$ incident to~$v$ onto the edge incident to~$w$.
        Otherwise, if the robber starts on some vertex~$v$ of~$H$, the~$k$ edges of~$H$ incident to~$v$ contain~$\ell$ cops, which can spread across the~$\ell$ leaves incident to~$v$.
        If~$k = 1$, there is one remaining cop which can move to the only edge of~$H$, thereby surrounding the robber.
        In the remaining cases that~$k \geq 2$, each of the~$k$ neighbors of~$v$ in~$H$ is incident to at least one other edge of~$H$ with a cop on.
        This cop can move onto the edge incident to~$v$, thereby surrounding the robber.
        \qedhere
    \end{itemize}
\end{proof}

\begin{corollary}
    \label{cor:leafy_edge}
    For every~$\Delta \geq 2$, there is a connected graph~$G$ with~$\Delta(G) = \Delta$ such that~$c_V(G) = 2(\Delta - 1)$ and~$c_E(G) = \Delta$.
\end{corollary}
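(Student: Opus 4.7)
The plan is to apply \cref{lem:leafy_graphs} with $H = K_2$ (so $k = 1$) and $\ell = \Delta - 1$, producing the double star $G$ with two adjacent centers $u, v$ each carrying $\ell$ pendant leaves $w_1, \ldots, w_\ell$ and $x_1, \ldots, x_\ell$ respectively. Clearly $\Delta(G) = k + \ell = \Delta$, the lemma directly gives $c_V(G) \geq (k+1)\ell = 2(\Delta-1)$, and \cref{obs:trivial_lower_bounds} gives $c_E(G) \geq \Delta$. What remains is to match these lower bounds by exhibiting explicit winning cop strategies, since \cref{lem:leafy_graphs} yields no direct upper bound on $c_V$ or $c_E$.

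For $c_V(G) \leq 2(\Delta-1)$, I would initially place the $2\ell$ cops at $\{u, v, w_1, \ldots, w_{\ell-1}, x_1, \ldots, x_{\ell-1}\}$, leaving exactly the two leaves $w_\ell$ and $x_\ell$ uncovered. Every leaf is already surrounded, since its unique neighbor is a center carrying a cop, so the robber's only viable start is $u$ or $v$; assume $u$ by symmetry. In the next cop turn the cop sitting on $u$ steps to $w_\ell$, after which all $\Delta$ neighbors of $u$ are occupied and the robber is surrounded.

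For $c_E(G) \leq \Delta$, I would place all $\Delta$ cops on the edges incident to $u$, namely on $uv, uw_1, \ldots, uw_\ell$. Then $u$ and each leaf $w_i$ are immediately surrounded, and if the robber starts at a leaf $x_j$ of $v$ the cop on $uv$ slides onto $vx_j$ on the first cop turn. The one delicate case is the robber starting at $v$: here the cops first collapse all $\ell$ cops from the $uw_i$-edges onto $uv$, producing $\ell + 1$ cops stacked on that single edge. Because $uv$ shares an endpoint with every edge incident to $u$ or to $v$, on the subsequent cop turn the stack can disperse to exactly the surround set required by the robber's response: to $\{vx_1, \ldots, vx_\ell\}$ if he stayed at $v$, to $\{uw_1, \ldots, uw_\ell\}$ if he stepped to $u$, or to a single edge $vx_j$ if he stepped to some leaf of $v$.

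The main obstacle is the case analysis in the edge version after the cops have collapsed onto $uv$: one must verify that all three possible robber responses from $v$ are absorbed by the same concentrated configuration. The observation that makes this routine is that $uv$ is the unique edge sharing an endpoint with every other edge one could conceivably need to occupy, so the concentrated mass is fully flexible for whichever of $v$, $u$, or $x_j$ the robber picks.
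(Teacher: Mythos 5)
Your proposal is correct and follows essentially the same route as the paper: same graph ($K_2$ with $\Delta-1$ leaves per center), same lower bounds via \cref{lem:leafy_graphs} and \cref{obs:trivial_lower_bounds}, and explicit cop strategies for the matching upper bounds. The only difference is cosmetic — the paper stacks the vertex cops on $u$ and $v$ and starts all edge cops on $uv$, so it surrounds in a single cop move, whereas your spread-out vertex placement and your collapse-then-disperse edge maneuver take one extra step but are equally valid.
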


\begin{proof}
    Let~$H = K_2$ be a single edge~$uv$, so~$k = 1$, and choose~$\ell = \Delta - 1$.
    Then it holds that~$\Delta(G) = \Delta$.

    \begin{itemize}
        \item It follows from \cref{lem:leafy_graphs} that~$c_V(G) \geq (k+1)\ell = 2(\Delta - 1)$.
        On the other hand, it is easy to see that~$2(\Delta - 1)$ vertex cops suffice:
        Initially, $\Delta - 1$ cops position themselves at~$u$ and~$v$ each.
        If the robber positions himself on a leaf, he is surrounded immediately.
        If he starts on~$v$ (the case for~$u$ is symmetric), then the cops at~$v$ move to the~$\Delta-1$ leaves, thereby surrounding the robber (together with the cops at~$u$).

        \item To prove that~$c_E(G) = \Delta$, first observe that~$c_E(G) \geq \Delta$ by \cref{obs:trivial_lower_bounds} (\cref{lem:leafy_graphs} gives a worse lower bound in this case).
        A winning strategy for~$\Delta$ edge cops goes as follows:
        All cops start on edge~$uv$.
        If the robber starts on~$v$ (or symmetrically~$u$), then one of the cops stays on~$uv$, while the other~$\Delta - 1$ occupy the edges incident to the leaves adjacent to~$v$.
        If the robber starts on a leaf~$w$, a single cop can move onto the edge incident to~$w$.
        \qedhere
    \end{itemize}
\end{proof}

\subsection{Graphs from Mutually Orthogonal Latin Squares}
\label{sec:latin_squares}

A \emph{Latin square} of order $k \geq 1$ is a $(k \times k)$-array filled with numbers from $[k] = \{1, \ldots, k\}$ such that each row and each column contains each number from~$[k]$ exactly once.
Formally, a Latin square~$L$ is a partition $L(1) \cup \cdots \cup L(k)$ of $A = [k] \times [k]$ such that row~$i$ (with $i \in [k]$) is $A[i, \cdot] = \{(i,j) \in A \mid j \in [k]\}$, and for every number $n \in [k]$ we have $\abs{A[i, \cdot] \cap L(n)} = 1$, and symmetrically for the columns.
See the left of~\cref{fig:merged-MOLS} for two different Latin squares.

Let~$L_1$ and~$L_2$ be two Latin squares of order~$k$.
Their \emph{juxtaposition} $L_1 \otimes L_2$ is the Latin square of order~$k$ that contains in each cell the ordered pair of the entries of~$L_1$ and~$L_2$ in that cell.
We say that~$L_1$ and~$L_2$ are \emph{orthogonal} if each ordered pair appears exactly once in~$L_1 \otimes L_2$, i.e., if for every two distinct $n_1,n_2 \in [k]$ we have $\abs{L_1(n_1) \cap L_2(n_2)} = 1$.
It is well known that~$k-1$ \emph{mutually orthogonal Latin squares (MOLS)} $L_1, \ldots, L_{k-1}$ (meaning that $L_s$ and~$L_t$ are orthogonal whenever~$s \neq t$) exist if and only if~$k$ is a prime power~\cite{Keedwell2015_LatinSquares}.
The two Latin squares in \cref{fig:merged-MOLS} (left) are indeed orthogonal, as can be seen by their juxtaposition below.

\medskip
\noindent
Let~$k$ be a prime power and $L_1, \ldots, L_{k-1}$ a set of~$k-1$ mutually orthogonal Latin squares of order~$k$.
We construct a graph~$G_k$.
Let $A = [k] \times [k]$ denote the set of all positions, $R = \{ A[i,\cdot] \mid i \in [k]\}$ denote the set of all rows in~$A$, and $\mathcal{L} = \{L_s(n) \mid s \in [k-1] \land n \in [k]\}$ denote the set of all parts of the Latin squares $L_1, \ldots, L_{k-1}$.
Then, $G_k = (V,E)$ is the graph with
\[
    V = A \cup R \cup \mathcal{L} \quad \text{and}
    \quad E = \{pS \mid p \in A,~S \in R \cup \mathcal{L},~p \in S\}
    \text{.}
\]
We observe that~$G_k$ is a $k$-regular bipartite graph with $\abs{A} + \abs{R \cup \mathcal{L}} = k^2+(k + k(k-1)) = 2k^2$ vertices.
It has an edge between position $p \in A$ and a set $S \in R \cup \mathcal{L}$ if and only if~$p$ is in set~$S$.
See also the right of~\cref{fig:merged-MOLS} for a schematic drawing.

\begin{figure}[tb]
    \centering
    \includegraphics{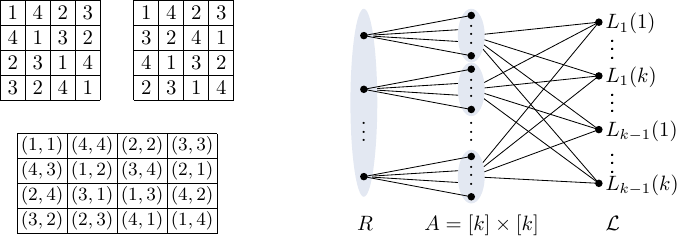}
    \caption{
        Left:
        Two Latin squares and their juxtaposition, proving that they are orthogonal.
        Right:
        The graph~$G_k$ created from~$k-1$ MOLS of order~$k$.
        The vertices in~$R$ correspond to the rows of~$A$, the middle vertices correspond to the cells of~$A$ (ordered row by row in the drawing) and the vertices in~$\mathcal{L}$ correspond to the parts of the MOLS.
        }
    \label{fig:merged-MOLS}
\end{figure}

\begin{lemma}
    \label{lem:MOLS_girth_6}
    For a prime power~$k$, graph~$G_k$ has~$\girth(G_k) \geq 6$.
\end{lemma}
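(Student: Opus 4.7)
Since $G_k$ is bipartite with parts $A$ and $R \cup \mathcal{L}$, its girth is automatically even and at least~$4$. The plan is therefore to rule out $4$-cycles, which then forces $\girth(G_k) \geq 6$.

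A $4$-cycle in $G_k$ has the form $p - S - p' - S' - p$ with distinct positions $p, p' \in A$ and distinct sets $S, S' \in R \cup \mathcal{L}$ such that $\{p, p'\} \subseteq S \cap S'$. So it suffices to show that any two distinct sets from $R \cup \mathcal{L}$ share at most one element of $A$. The verification splits into three cases according to whether $S$ and $S'$ come from $R$ or from $\mathcal{L}$.

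First, if $S, S' \in R$ are two distinct rows, they are disjoint by definition, so they share nothing. Second, if $S \in R$ is the $i$-th row and $S' = L_s(n) \in \mathcal{L}$, then the defining property of a Latin square (each number appears exactly once in every row) gives $|S \cap S'| = 1$. Third, if $S = L_s(n)$ and $S' = L_t(n')$ are both in $\mathcal{L}$: when $s = t$ and $n \neq n'$, the sets $L_s(n)$ and $L_s(n')$ are disjoint parts of the same partition of $A$; when $s \neq t$, the orthogonality of $L_s$ and $L_t$ is exactly the statement $|L_s(n) \cap L_t(n')| = 1$. In every case $|S \cap S'| \leq 1$, so no $4$-cycle exists and $\girth(G_k) \geq 6$.

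The main step is really just the third case, where we explicitly invoke orthogonality; the other two cases are immediate from the Latin-square axioms. No further combinatorial obstacle is expected. \qed
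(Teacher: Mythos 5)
Your proof is correct and follows essentially the same route as the paper: both reduce to excluding $4$-cycles using bipartiteness and then check the same case distinction (two rows, a row and a Latin-square part, two parts of the same square, two parts of different squares via orthogonality). No gaps.
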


\begin{proof}
    It suffices to show that~$G_k$ has no $4$-cycle, since~$G_k$ is bipartite (and therefore has no odd cycles).
    First note that every $4$-cycle must be of the form $(p_1, S_1, p_2, S_2)$, in particular, it must contain two vertices~$p_1$ and~$p_2$ from~$A$.
    However, there is neither a $4$-cycle
    \begin{itemize}
        \item with $S_1, S_2 \in R$, since every position is in at most one row,
        \item nor with $S_1 \in R$ and $S_2 \in \mathcal{L}$, since every part~$L_s(n)$ of a Latin square contains only one position of each row,
        \item nor with $S_1, S_2 \in \mathcal{L}$ being parts of the same Latin square~$L_s$, since every position is in only one part of~$L_s$,
        \item nor with $S_1, S_2 \in \mathcal{L}$ being parts of different Latin squares~$L_s$ and~$L_t$, since~$L_s$ and~$L_t$ are orthogonal (and hence no two distinct positions are in the same parts in~$L_s$ and in~$L_t$).
        \qedhere
    \end{itemize}
\end{proof}

\begin{lemma}
    \label{lem:MOLS}
    For a prime power~$k$, graph~$G_k$ has $c(G_k) = k$, $c_{V, \mathrm{r}}(G_k) \leq k+1$ and~$c_E(G_k) \geq k(k-1)$.
\end{lemma}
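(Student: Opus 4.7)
I would prove the three statements separately.

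First, the lower bound $c_E(G_k) \geq k(k-1)$ is essentially a corollary of \cref{lem:leafy_graphs}, item 3, applied with $H = G_k$ and $\ell = 0$, so that $G = H = G_k$. The graph $G_k$ is $k$-regular by construction, and \cref{lem:MOLS_girth_6} gives $\girth(G_k) \geq 6$. The first case of that lower bound then yields $c_E(G_k) \geq k(k + \ell - 1) = k(k - 1)$; the remark opening the proof of \cref{lem:leafy_graphs} confirms that its argument also goes through for $\ell = 0$.

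Next, for the classical cop number $c(G_k) = k$, the lower bound is immediate from the Aigner--Fromme theorem that any graph of girth at least five has cop number at least its minimum degree, applied to the $k$-regular graph $G_k$. For the upper bound I would place one cop on each row vertex $S_i \in R$ and do a case analysis on the robber's starting vertex. Since the robber avoids cop vertices, he starts either at some position $(i,j) \in A$, in which case the cop at the adjacent row vertex $S_i$ captures him in the next turn, or at some part $L_s(n) \in \mathcal{L}$, in which case each cop at $S_i$ moves to the unique position in row $i$ that lies in $L_s(n)$ and captures on the following move.

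Finally, for $c_{V, \mathrm{r}}(G_k) \leq k + 1$ I would design an explicit strategy for $k + 1$ restrictive vertex cops based on the following key structural observation: whenever the robber sits on a part $L_s(n) \in \mathcal{L}$ while cops still occupy all row vertices $S_1, \ldots, S_k$, the cops can surround him in a single turn by moving each $S_i$-cop to the unique position of row $i$ lying in $L_s(n)$. My initial placement is one cop on each $S_i$ and one extra cop on the part $L_1(1) \in \mathcal{L}$. The strategy then forces the robber either to be on a part already or to move to one within a few turns.

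If the robber starts on a part other than $L_1(1)$, the surround observation ends the game immediately. If he starts at a position $(i,j) \in A$ already lying in $L_1(1)$, then the extra cop moves straight onto $(i,j)$; the restrictive rule, together with the fact that $S_i$ is still guarded, forces the robber onto one of his part neighbours, which is then surrounded on the next turn (the pushing cop now playing the role of the row-$i$ position cop). If instead $(i,j) \notin L_1(1)$, the extra cop takes two turns to travel from $L_1(1)$ to $S_i$ via the unique position of $L_1(1)$ in row $i$; afterwards two cops sit on $S_i$ and the cops execute a \emph{push} by moving one of them onto $(i,j)$ while the other continues to guard $S_i$, again forcing the robber onto a part to be surrounded on the following move. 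At every step of this preparation phase, if the robber voluntarily moves to a part himself, the surround observation ends the game in one further turn. The main obstacle is exactly this last case: the cops must safely walk the extra cop to $S_i$ without the robber escaping to the row vertex, and maintaining a cop on $S_i$ at every intermediate turn is what prevents this.
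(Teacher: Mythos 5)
Your proposal is correct and follows essentially the same route as the paper's proof: the Aigner--Fromme girth argument combined with \cref{lem:MOLS_girth_6} for $c(G_k)\geq k$, cops stationed on the row vertices $R$ (so that a robber on any part $L_s(n)$ is surrounded in one move, one cop per row) for both upper bounds, an extra cop pushing the robber off his position in $A$ onto a part in the restrictive version, and \cref{lem:leafy_graphs} with $\ell=0$ for $c_E(G_k)\geq k(k-1)$. The only difference is bookkeeping: you fix the extra cop's start at $L_1(1)$ and give its explicit two-step route to $S_i$, while the paper simply lets this cop walk to the robber's (necessarily stationary) vertex.
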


\begin{proof}
    \begin{itemize}
        \item To prove~$c(G_k) \geq k$, we use a theorem from Aigner and Fromme~\cite{Aigner1984_Planar3Cops} stating that any graph with girth at least~$5$ has~$c(G) \geq \delta(G)$.
        As~$G_k$ is $k$-regular and has $\girth(G_k) \geq 6$ (\cref{lem:MOLS_girth_6}), we get~$c(G_k) \geq k$.

        On the other hand,~$k$ cops suffice:
        Initially, the cops occupy the~$k$ vertices in~$R$, so the robber has to position himself in~$A$ or~$\mathcal{L}$.
        If he starts at a vertex~$v \in A$, then this vertex must be in some row and the cop on the corresponding vertex in~$R$ can capture him in his next move.
        Otherwise, if the robber starts on some~$L_i(n) \in \mathcal{L}$ (for~$i \in [k-1]$ and~$n \in [k]$), then~$L_i(n)$ has exactly~$k$ neighbors in~$A$, one per row.
        The cops can move to~$N_{G_k}(L_i(n))$, thereby surrounding the robber (and thereby capturing him in their next move).

        \item To show that~$k+1$ restrictive vertex cops suffice to surround the robber, we start by placing one cop on each of the~$k$ vertices in~$R$.
        Thus, the robber may not enter~$R$.
        Now, if the robber ends his turn on a vertex in $\mathcal{L}$ corresponding to the part~$L_s(n)$ of the Latin square~$L_s$ for some $n \in [k]$, then the~$k$ cops on~$R$ can move to the vertices in~$A$ corresponding to the positions in~$L_s(n)$, and thus surround the robber.
        This works because the elements in~$L_s(n)$ are from all~$k$ different rows (because~$L_s$ is a Latin square) and there is one cop responsible for every row.
        Thus, we may assume that the robber starts on a vertex in $A$ and never moves.
        But then, it is enough to use the $(k+1)$-th cop to go to the robber vertex and force him to move (recall that we are in the restrictive version).
        The only possible move is to a vertex from~$\mathcal{L}$ where he gets surrounded in the next move of the cops as seen above.

        \item The lower bound for nonrestrictive edge cops follows from~$G_k$ being $k$-regular and having~$\girth(G) \geq 6$ by \cref{lem:MOLS_girth_6}.
        Attaching~$\ell = 0$ leaves to each vertex, \cref{lem:leafy_graphs} yields~$c_E(G) \geq k(k+\ell-1) = k(k-1)$.
        \qedhere
    \end{itemize}
\end{proof}

\begin{remark}
    Burgess et al.~\cite{Burgess2020_Surround} notice that, for graphs~$G$ of many families with a \enquote{large} value of~$c_{V, \mathrm{r}}(G)$, the classical cop number~$c(G)$ was \enquote{low} (often even constant).
    In fact, they only provide a single family of graphs (constructed from finite projective planes) where~$c(G) \approx c_{V, \mathrm{r}}(G)$.
    They ask (Question~$7$ in~\cite{Burgess2020_Surround}) whether graphs with large~$c_{V, \mathrm{r}}(G)$ inherently possess some property that implies that~$c(G)$ is low.
    Our graph $G_k$ from $k-1$ MOLS satisfies $c(G_k),c_{V,\mathrm{r}}(G_k) \in \{k,k+1\}$.
    We interpret this as evidence that there is no such property.
\end{remark}

\subsection{Line Graphs of Complete Graphs}

The \emph{line graph}~$L(G)$ of a given graph~$G=(V,E)$ is the graph whose vertex set consists of the edges~$E$ of~$G$, and two vertices of~$L(G)$ are connected if their corresponding edges in~$G$ share an endpoint.
For~$n \geq 3$, let~$K_n$ denote the complete graph on the set~$[n] = \{1, \ldots, n\}$.
For distinct $x,y \in [n]$, we denote by~$\{x,y\}$ the vertex of $L(K_n)$ corresponding to the edge between~$x$ and~$y$ in~$K_n$.
Burgess et al.~\cite{Burgess2020_Surround} showed that $c_{V, \mathrm{r}}(L(K_n)) = 2(n-2) = \delta(L(K_n))$.
This is obtained by placing the cops on all vertices~$\{1,x\}$ for~$x \in \{2, \ldots n\}$ and~$\{2,y\}$ for $y \in \{3, \ldots, n - 1\}$.
The cops can surround the robber in their first move.

\begin{lemma}
    \label{lem:line_graph_of_complete_graph}
    For every $n \geq 3$ we have $c_V(L(K_n)) = 2(n-2)$, $c_E(L(K_n)) \geq n(n-2)/3$, and $c_{E, \mathrm{r}}(L(K_n)) \geq (n^2 - 4)/12$.
\end{lemma}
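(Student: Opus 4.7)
The plan is to establish the three claims of \cref{lem:line_graph_of_complete_graph} in turn.

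For $c_V(L(K_n)) = 2(n-2)$, the lower bound is immediate from $\Delta(L(K_n)) = 2(n-2)$ via \cref{obs:trivial_lower_bounds}. For the matching upper bound I would reuse Burgess et al.'s~\cite{Burgess2020_Surround} placement: $n-1$ cops on $\{1,x\}$ for $x \in \{2,\ldots,n\}$ and $n-3$ cops on $\{2,y\}$ for $y \in \{3,\ldots,n-1\}$, totalling $2(n-2)$. For any robber placement $\{a,b\}$ I would exhibit a perfect matching between the $2(n-2)$ cops and the $2(n-2)$ neighbors of $\{a,b\}$ in $L(K_n)$ such that each cop either sits at its target or shares an element of $\{1,2,a,b\}$ with it (and hence can move there in one step). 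A short case distinction on whether $\{a,b\}$ contains $1$, $2$, or $n$ handles all cases, including the non-restrictive possibility that the robber sits on a cop-vertex: that cop then uses the shared element with $\{a,b\}$ to move onto the otherwise uncovered target. Since this surrounds in a single cop-turn, the argument is uniform across the restrictive and non-restrictive versions.

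For $c_E(L(K_n)) \geq n(n-2)/3$ I plan a threat-counting averaging argument based on the clique decomposition $V(L(K_n)) = \bigcup_{x \in [n]} V_x$, where $V_x = \{\{x,y\} : y \neq x\}$ is the clique of the $n-1$ vertices of $L(K_n)$ corresponding to $K_n$-edges incident to $x$. Call a cop placed at an $L(K_n)$-edge $(C_1,C_2)$ \emph{threatening} a vertex $w$ if $\{C_1,C_2\} \cap N[w] \neq \emptyset$; this is a necessary condition for the cop to move to any edge incident to $w$ in a single step, so surrounding $w$ demands at least $2(n-2)$ threats on $w$. For a fixed cop whose underlying cherry in $K_n$ has three vertices $\{a,b,c\}$ and for fixed $x \in [n]$, the cop threatens $\{x,y\} \in V_x$ for $n-1$ choices of $y$ if $x \in \{a,b,c\}$ and for exactly $3$ choices otherwise. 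Summing over $x$ gives per-cop contribution $3(n-1) + (n-3)\cdot 3 = 6(n-2)$ to $\sum_{x}\sum_{\{x,y\}\in V_x} f(\{x,y\})$. Averaging then produces, whenever $N < n(n-2)/3$, some clique $V_x$ that contains an under-threatened vertex, to which the robber can navigate within $V_x$ in a single hop (since $V_x$ is a clique).

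For $c_{E,\mathrm{r}}(L(K_n)) \geq n^2/12$, I would adapt the previous argument to the restrictive rule by strengthening \enquote{safe} to also demand a cop-free incident edge along which the robber can traverse to the safe vertex. The extra obligation is absorbed into the averaging by charging each cop with an additional blocking role, which degrades the constant from $1/3$ to $1/12$. The main obstacle, in my view, is precisely this mobility analysis in the restrictive case: guaranteeing that the robber can actually \emph{reach} the safe vertex promised by the averaging, rather than being trapped by blocking cops. By comparison, the one-move surround for $c_V$ is straightforward matching bookkeeping and the pencil-based averaging for $c_E$ is the clean centrepiece of the proof.
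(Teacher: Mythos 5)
Your first part is fine and essentially the paper's argument: the lower bound comes from $\Delta(L(K_n)) = 2(n-2)$, and the upper bound from reusing the Burgess et al.\ placement and checking the extra cases where the robber starts on a cop. The gap is in the edge-cop bounds. Your safety notion is attached to a vertex $w$ of $L(K_n)$ (fewer than $2(n-2)$ threatening cops), and your averaging over the cliques $V_x$ only shows that \emph{some} clique contains an under-threatened vertex; nothing guarantees that this clique is $V_a$ or $V_b$ for the robber's current position $\{a,b\}$, and only vertices of $V_a \cup V_b$ are reachable in a single hop. Worse, the localized statement your strategy actually needs is false: $2(n-2)$ cops stacked on one $L(K_n)$-edge incident to $\{a,b\}$ (a cherry of $K_n$ containing both $a$ and $b$) threaten every vertex of $V_a \cup V_b$, so every vertex the robber can move to or stay on has $f(w) \geq 2(n-2)$, even though $2(n-2)$ is far below $n(n-2)/3$. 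Hence the averaging does not hand the robber a reachable safe vertex, and the proof of $c_E(L(K_n)) \geq n(n-2)/3$ does not go through as sketched.

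The missing idea, which is what the paper uses, is to count per vertex of $K_n$ rather than per vertex of $L(K_n)$: let $p(v)$ be the number of cops whose cherry contains $v$, so $\sum_{v} p(v) = 3k$. The key observation is that a cop which ends its move on a cherry with midpoint $v$ must already occupy a cherry containing $v$; therefore, if the cops surround the robber on $\{u,w\}$, then one cop-move earlier at least $n-2$ cherries contained $u$ and at least $n-2$ contained $w$. Consequently, ending a turn on any vertex $\{v^*,u\}$ with $p(v^*) < n-2$ is safe: safety depends on a single $K_n$-endpoint, and any such $v^*$ is reachable in one step from every position of the robber because some vertex of the form $\{v^*,\cdot\}$ lies in every closed neighborhood of $L(K_n)$. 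Averaging then produces such a $v^*$ whenever $k < n(n-2)/3$. For the restrictive bound the same counts are used with threshold $n/2$, together with two ingredients your sketch does not supply: the robber stays put as long as one of his two endpoints remains low, and the cops able to block his exits from $\{v^*,u\}$ (those sitting, after their move, on a cherry with midpoint $v^*$) number at most $p(v^*) < n/2$, again because the new midpoint lies in the old cherry; combined with fewer than $n/2$ vertices being high, this yields an unblocked low exit $\{v^*,w^*\}$ and the constant $1/12$. Your plan to \enquote{absorb the blocking role into the averaging} is only a statement of intent and, as it stands, inherits the reachability gap described above.
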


\begin{proof}
    As $2(n-2) = \Delta(L(K_n)) \leq c_V(L(K_n))$ (\cref{obs:trivial_lower_bounds}), it is enough to observe that the strategy for the vertex cops provided by Burgess et al.~\cite{Burgess2020_Surround} also works for vertex cops in the nonrestrictive version.
    In their strategy, the cops start at the vertices $\{1,x\}$ for~$x \in \{2, \ldots, n\}$ and~$\{2,y\}$ for~$y \in \{3, \ldots, n - 1\}$ of~$L(K_n)$.
    They prove that these cops can surround the robber immediately after he chose his start vertex, in particular, before the robber first moves along an edge.
    Therefore, the only remaining cases to check for the nonrestrictive version are what happens if the robber starts on a cop (which would be forbidden in the restrictive setting):
    \begin{itemize}
        \item If the robber starts on vertex~$\{1,2\}$, then he is surrounded as soon as the cop from~$\{1,2\}$ moves to~$\{2,n\}$.

        \item If the robber starts on a vertex~$\{1,x\}$, we first consider the case that~$x \in \{3, \ldots, n - 1\}$.
        The robber gets surrounded by the cop at~$\{1,x\}$ moving to~$\{x,n\}$, and each cop from~$\{2,y\}$ for~$y \in \{3, n-1\} \setminus \{x\}$ moving to~$\{x,y\}$.
        In the case that~$x = n$, then the cop on~$\{1,2\}$ moves to~$\{n,2\}$, and each cop on~$\{2,y\}$ for~$y \in \{3, \ldots, n-1\}$ moves to~$\{n,y\}$.

        \item Lastly, if the robber starts on a vertex~$\{2,y\}$ for~$y \in \{3, \ldots, n-1\}$, then the cop on~$\{2,y\}$ moves to~$\{1,y\}$, and each cop on~$\{1,x\}$ for~$x \in \{3, \ldots, n\}$ moves to~$\{y,x\}$.
    \end{itemize}

    We now consider the lower bounds on~$c_{E}(L(K_n))$ and~$c_{E, \mathrm{r}}(L(K_n))$, and describe an evasion strategy for the robber, provided that the number~$k$ of edge cops is sufficiently small (an exact bound will  be determined later for both versions).
    To this end, recall that each edge cop occupies an edge of~$L(K_n)$ that corresponds to a copy of~$P_3$ (the path on two edges) in~$K_n$.
    Each such~$P_3$ has a \emph{midpoint} and two \emph{leaves}, each being a vertex of~$K_n$.
    An edge cop moving one step from~$e_1$ to~$e_2$ in~$L(K_n)$ corresponds to~$P_3^1$ and~$P_3^2$ in~$K_n$.
    Here~$P_3^1$ shares an edge with~$P_3^2$, and thus, the midpoint of~$P_3^2$ is contained (as a midpoint or leaf) in~$P_3^1$.
    At the moment that the robber gets surrounded on a vertex~$\{u,v\}$ of~$L(K_n)$, the edge cops occupy all~$n-2$ edges of~$L(K_n)$ corresponding to~$P_3$'s in~$K_n$ with midpoint~$u$ and leaf~$v$, as well as all~$n-2$ edges of~$L(K_n)$ corresponding to~$P_3$'s in~$K_n$ with midpoint~$v$ and leaf~$u$.
    Thus, in the step immediately preceding the surround, each of~$u$ and~$v$ was contained in the~$P_3$ of at least~$n-2$ edge cops.
    
    With this in mind, for any vertex~$v$ of~$K_n$ and point in time~$t \in \N$, let~$p^t(v)$ be the number of edge cops whose corresponding~$P_3$ contains~$v$ before the~$t$-th turn of the robber.
    Similarly, let~$q^t(v)$ be the number of edge cops whose corresponding~$P_3$ contains~$v$ as a midpoint.
    For all~$v \in [n]$ and~$t \geq 2$, it holds that
    \begin{equation}
        \label{eq:p_vs_q}
        q^t(v) \leq p^{t-1}(v)
        \text{.}
    \end{equation}
    Further, recall that each cop has exactly one corresponding~$P_3$, so we have
    \begin{equation}
        \label{eq:average_p(v)}
        \sum_{v \in [n]} p^t(v) = 3k
    \end{equation}
    for all~$t \in \N$.
    The robber cannot be surrounded on a vertex~$\{u,v\}$ of~$L(K_n)$ (with~$u,v \in [n]$, $u \neq v$) if~$\min\{p^t(u), p^t(v)\} < n - 2$.
    Therefore, we call a vertex~$w \in [n]$ of~$K_n$ \emph{safe (at time~$t$)} if~$p^t(w) < n - 2$.

    \begin{itemize}
        \item We start with the nonrestrictive edge version, and choose~$k < n(n-2)/3$.
        Then, Equation~\eqref{eq:average_p(v)} implies that
        \[
            \#\left\{w \in [n] \mid p^t(w) \geq n-2\right\}
            \leq \frac{3k}{n-2}
            < \frac{3n(n-2)}{3(n-2)}
            = n
        \]
        for every~$t \in \N$.
        In particular, there is always a safe vertex~$w \in [n]$.
        For~$t = 1$, i.e., in his first turn, the robber can go to~$\{v,w\}$ with an arbitrary~$v \in [n] \setminus \{w\}$.
        For~$t > 1$, let~$\{u,v\}$ be the robber's current position.
        Then, as~$K_n$ is a complete graph, the robber can move to~$\{v,w\}$ in a single step.
        In both cases, the robber avoids being surrounded because~$p^t(v,w) \leq p^t(w) < n - 2$.

        \item We now consider the restrictive edge version.
        We prove that the robber can avoid being surrounded by ending his $t$-th turn (for~$t \in \N$) on a vertex~$\{u,v\}$ of~$L(K_n)$ with~$p^t(u,v) < (n-2)/2$.
        It follows from~$n \geq 3$ that~$(n-2)/2 < n-2$, so at least one of~$u$ or~$v$ is safe at time~$t$.

        Let~$k < (n^2-4)/12$.
        For~$t = 1$, i.e., the robber's first move, it suffices to identify a single~$u \in [n]$ with~$p^1(u) < (n-2)/2$.
        Then, the robber can start at~$\{u,v\}$ with an arbitrary~$v \in [n] \setminus \{u\}$.
        Such a~$u$ always exists because
        \begin{equation}
            \label{eq:counting_safe_vertices}
            \#\left\{w \in [n] \mid p^1(w) \geq \frac{n-2}{2}\right\}
            \leq \frac{3k}{(n-2)/2}
            = \frac{6k}{n-2}
            < \frac{n+2}{2}
            < n
            \text{.}
        \end{equation}
        Here, the first inequality follows from Equation~\eqref{eq:average_p(v)}, the second from the choice of~$k$, and the third from~$n \geq 3$.

        \medskip
        For~$t > 1$, let~$\{u,v\}$ be the current robber position, chosen in turn~$t-1$ with~$p^{t-1}(u,v) < (n-2)/2$.
        We assume without loss of generality that~$p^{t-1}(u) < (n-2)/2$, and therefore~$q^t(u) < (n-2)/2$ (see Equation~\eqref{eq:p_vs_q}).
        Then, less than~$(n-2)/2$ cops correspond to~$P_3$'s in~$K_n$ with midpoint~$u$.
        This means that less than~$(n-2)/2$ of the~$n-2$ edges of the form~$\bigl\{\{u,v\}, \{u,w\}\bigr\}$ (with~$w \in [n] \setminus \{u,v\}$) are blocked by a cop.
        In particular, more than~$(n-2)/2$ of these edges can be taken by the robber to move to a vertex~$\{u,w\}$ of~$L(K_n)$.

        It remains to prove that at least one of these~$w \in [n]$ is safe, i.e., has~$p^t(w) < (n-2)/2$.
        This is the case because, by Equation~\eqref{eq:counting_safe_vertices},
        there are more than~$n - (n+2)/2 = (n-2)/2$ vertices~$w \in [n]$ with~$p^t(w) < (n-2)/2$.
        If~$u$ or~$v$ are among these, i.e., $\min\{p^t(u), p^t(v)\} < (n-2)/2$, then the robber stays at his current position.
        Otherwise, more than half of the~$w \in [n] \setminus \{u,v\}$ are safe.
        At the same time, more than half of the edges $\bigl\{\{u,v\}, \{u,w\}\bigr\}$ are free for the robber.
        We conclude, that there is at least one~$w^*$ with~$p^t(w^*) < (n-2)/2$ such that the robber can move to~$\{u,w^*\}$.
        \qedhere
    \end{itemize}
\end{proof}

The bounds from \cref{lem:line_graph_of_complete_graph} are stated in terms of the number of vertices~$n$.
Stating them in terms of their maximum degree~$\Delta \coloneqq \Delta(L(K_n)) = 2(n-2)$, we obtain the claimed lower bounds in \cref{itm:e_v,itm:er_vr} of \cref{thm:bounds}.
\begin{align*}
    c_V(L(K_n)) &= \Delta &
    c_E(L(K_n)) &\geq \frac{\Delta^2 + 4\Delta}{12} \\
    c_{V, \mathrm{r}}(L(K_n)) &= \Delta & 
    c_{E, \mathrm{r}}(L(K_n)) &\geq \frac{\Delta^2 + 8\Delta}{48}
\end{align*}
With these, all lower bounds from \cref{thm:bounds} are proven.

\section{When Capturing is not Surrounding}
\label{sec:capturing_is_not_surrounding}

This section is devoted to the proof of \cref{thm:surrounding_vs_classic}, i.e., that none of the four surrounding cop numbers can be bounded by any function of the classical cop number and the maximum degree of the graph.
In particular, we shall construct, for infinitely many integers~$k \geq 1$, a graph~$G_k$ with~$c(G_k) = 2$ and~$\Delta(G_k) = 3$, but~$c_{V, \mathrm{r}}(G_k) \geq k$.
\Cref{thm:bounds} then implies that~$c_V(G_k)$, $c_E(G_k)$ and~$c_{E,\mathrm{r}}(G_k)$ are unbounded as well for growing~$k$.

The construction of~$G_k$ is quite intricate.
We divide it into several steps.

\subparagraph{The Graph \texorpdfstring{$\bm{H[s]}$}{H[s]}.}
Let~$s \geq 1$ be an integer and let~$k = 2^{s-1}$.
We start with a graph~$H[s]$, which we call the \emph{base graph}, that has the following properties:
\begin{itemize}
    \item $H[s]$ is $2k$-regular, i.e., every vertex of~$H[s]$ has degree $2k$.
    \item $H[s]$ has girth at least~$5$.
\end{itemize}
There are many ways to construct such graphs $H[s]$, one being our graphs in \cref{sec:latin_squares} constructed from $2^s-1$ mutually orthogonal Latin squares.
An alternative construction (not relying on non-trivial tools) is an iteration of the $6$-cycle, as illustrated in \cref{fig:C6-iteration}.
We additionally endow~$H[s]$ with an orientation such that each vertex has exactly~$k = 2^{s-1}$ outgoing and exactly~$k = 2^{s-1}$ incoming edges.
(For example, orient the edges according to an Eulerian tour in~$H[s]$.)

\begin{figure}[bt]
    \centering
    \includegraphics{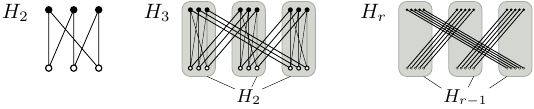}
    \caption{Iterating $C_6 = H_2$ to obtain $r$-regular (bipartite) graphs $H_r$ with $\girth(H_r) \geq 5$.}
    \label{fig:C6-iteration}
\end{figure}

\subparagraph{The Graph \texorpdfstring{\bm{$H[s,\ell]$}}{H[s,l]}.}
Let~$\ell \geq 1$ be another integer\footnote{
    We shall choose~$\ell \gg s$ later.
    So you may think of~$s$ as \enquote{short} and of~$\ell$ as \enquote{long}.
}.
We define a graph~$H[s,\ell]$ based on~$H[s]$ and its orientation.
See \cref{fig:Hsl-construction} for an illustration with $s=3$ and $\ell=4$.
For each vertex~$a$ in~$H[s]$, take a complete balanced binary tree~$T(a)$ of height~$s = \log_2(k)+1$ with root~$r(a)$ and~$2^s = 2k$ leaves.
Let~$r^{\mathrm{in}}(a)$ and~$r^{\mathrm{out}}(a)$ denote the two children of~$r(a)$ in~$T(a)$, and let~$T^{\mathrm{in}}(a)$ and~$T^{\mathrm{out}}(a)$ denote the subtrees rooted at~$r^{\mathrm{in}}(a)$ and~$r^{\mathrm{out}}(a)$, respectively.
We associate each of the~$k = 2^{s-1}$ leaves in~$T^{\mathrm{in}}(a)$ with one of the~$k$ incoming edges at~$a$ in~$H[s]$, and each of the~$k$ leaves in~$T^{\mathrm{out}}(a)$ with one of the~$k$ outgoing edges at~$a$ in~$H[s]$.
Finally, for each edge~$ab$ in~$H[s]$ oriented from~$a$ to~$b$, connect the associated leaf in~$T^{\mathrm{out}}(a)$ with the associated leaf in~$T^{\mathrm{in}}(b)$ by a path~$P(ab)$ of length~$2\ell+1$, i.e., on~$2\ell$ new inner vertices.
This completes the construction of~$H[s,\ell]$.

\medskip
\noindent
Establishing some notation and properties of~$H[s,\ell]$, observe that the following holds for any edge~$ab$ of the base graph~$H[s]$:
\begin{equation}
    \label{eqn:adjacent_roots_close}
    \dist_{H[s,\ell]}(r(a),r(b)) = 2s + 2\ell + 1
\end{equation}
Let $R = \{r(a) \mid a \in V(H[s])\}$, and call the elements of~$R$ the \emph{roots} in~$H[s,\ell]$.
For any root~$r \in R$ let us define the \emph{ball around~$r$} by
\[
    B(r) = \{
        v \in V(H[s,\ell]) \mid
        \dist_{H[s,\ell]}(v,r) \leq s + \ell
    \}
    \text{,}
\]
i.e.,~$B(r)$ consists of all vertices in~$H[s,\ell]$ that are closer to~$r$ than to any other root in~$R$.
Note that every vertex of~$H[s,\ell]$ lies in the ball around exactly one root.
Further, for distinct~$a,b \in V(H[s])$ with~$ab \notin E(H[s])$, let~$v \in B(r(a))$ and~$w \in B(r(b))$.
Then, we have
\begin{align}
    \label{eqn:distance_btw_Voronoi}
    \dist_{H[s,\ell]}(v,w) > 2\ell
    \text{.}
\end{align}
For each edge~$ab$ in~$H[s]$, let~$e(ab)$ denote the unique middle edge of~$P(ab)$ in~$H[s,\ell]$.
Then, $e(ab)$ connects a vertex in~$B(r(a))$, which we denote by~$v(a,b)$, with a vertex in~$B(r(b))$, which we denote by~$v(b,a)$.
See also \cref{fig:Hsl-construction}.

\begin{figure}[tb]
    \centering
    \includegraphics{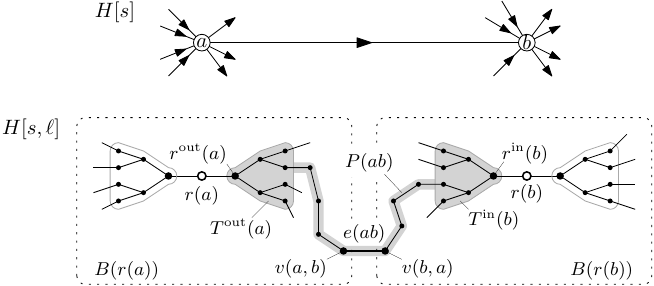}
    \caption{
        Construction of~$H[s,\ell]$ based on~$H[s]$.
        A directed edge~$ab$ in~$H[s]$ and the corresponding trees~$T^\mathrm{out}(a)$, $T^\mathrm{in}(b)$, and path~$P(ab)$ with middle edge~$e(ab)$ in~$H[s,\ell]$.
    }
    \label{fig:Hsl-construction}
\end{figure}

\subparagraph{The Graph \texorpdfstring{\bm{$H[s,\ell,m]$}}{H[s,l,m]}.}
Let~$m \geq 1$ be yet another integer\footnote{
    We shall choose~$\ell \gg m \gg s$ later.
    So you may think of~$m$ as \enquote{medium}.
}.
We start with two vertex-disjoint copies~$H_1$ and~$H_2$ of~$H[s,\ell]$, and transfer our notation (such as~$R$, $r(a)$, $v(a,b)$, etc.) for~$H[s,\ell]$ to~$H_i$ for~$i \in \{1,2\}$ by adding the subscript~$i$, e.g., $R_1$, $r_2(a)$, or $v_1(a,b)$.
We connect~$H_1$ and~$H_2$ as follows:
For each edge~$ab$ in~$H[s]$, we identify the edge~$e_1(ab)$ in~$H_1$ with the edge~$e_2(ab)$ in~$H_2$ such that $v_1(a,b) = v_2(a,b)$ and $v_1(b,a) = v_2(b,a)$.
Next, for each vertex~$a$ of~$H[s]$, use the vertex~$r_1(a)$ in~$H_1$ as an endpoint for a new path~$Q(a)$ of length~$m$, and denote the other endpoint of~$Q(a)$ by~$q(a)$.
Note that we do this only for the roots in~$H_1$.

Finally, we connect the vertices $\{q(a) \mid a \in V(H[s])\}$ by a cycle~$C$ of length $\abs{V(H[s])}$.
This completes the construction of~$H[s,\ell,m]$.
See \cref{fig:Hslm-construction} for an illustration.
Note that $H[s,\ell,m]$ has maximum degree~$3$ and degeneracy~$2$.

\begin{figure}[tb]
    \centering
    \includegraphics{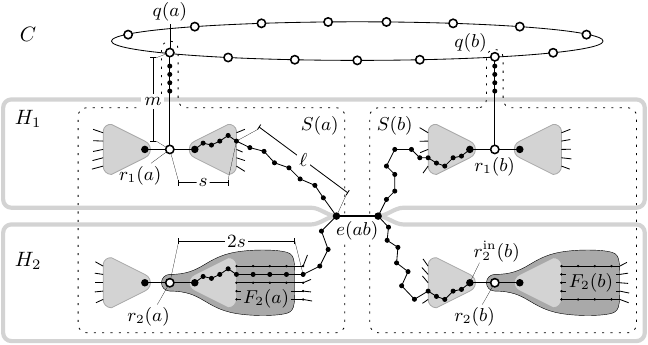}
    \caption{
        Construction of~$H[s,\ell,m]$ based on two copies of~$H[s,\ell]$.
        A directed edge~$ab$ in~$H[s]$ and the corresponding sets~$S(a)$, $S(b)$, $F_2(a)$, etc. and edge~$e(ab)$ in~$H[s,\ell,m]$.
    }
    \label{fig:Hslm-construction}
\end{figure}

\medskip
\noindent
Establishing more notation for $H[s,\ell,m]$, for each vertex~$a$ in~$H[s]$, let us define
\[
    S(a) = B(r_1(a)) \cup B(r_2(a)) \cup Q(a)
    \text{,}
\]
see again \cref{fig:Hslm-construction} for an illustration.
Let~$H_{-}$ denote the graph $H[s,\ell,m] - E(C)$ obtained from~$H[s,\ell,m]$ by removing all edges of the cycle~$C$.

\begin{lemma}
    \label{lem:Hslm_cop_number_2}
    For every $s,m \geq 1$ and $\ell > \abs{V(H[s])} + m + s$, it holds that $c(H[s,\ell,m]) \leq 2$.
\end{lemma}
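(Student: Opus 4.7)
I plan to prove this by exhibiting a coordinated strategy for two cops $A$ and $B$, exploiting two structural features of $H[s,\ell,m]$: (i) the cycle $C$ of length $|V(H[s])|$ is a short ``highway'' between the regions $S(a)$, whereas by \eqref{eqn:distance_btw_Voronoi} and the ball definition, moving between non-adjacent regions through the $H_i$-layers requires $\Omega(\ell)$ steps; and (ii) the natural graph isomorphism $\sigma \colon H_1 \to H_2$ fixes all shared middle-edge vertices $v(a,b)$ and extends to a graph retraction $\rho \colon H[s,\ell,m] \to H_1 \cup Q \cup C$ defined by $\rho = \sigma^{-1}$ on $V(H_2)$ outside shared vertices and $\rho = \mathrm{id}$ elsewhere. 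The homomorphism check for $\rho$ is routine: on edges within $H_2$ it uses that $\sigma^{-1}$ is an isomorphism, and edges incident to shared vertices are handled since such vertices are fixed.

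Cop $A$'s task is to shadow the robber's region on $C$: at each round $A$ moves along $C$ toward $q(a_r)$, where $a_r \in V(H[s])$ is the unique vertex with robber $\in S(a_r)$. When the robber crosses a middle edge $v(a,b)v(b,a)$ from $S(a)$ into $S(b)$, cop $A$ needs at most $\lfloor |V(H[s])|/2 \rfloor$ rounds to walk from $q(a)$ to $q(b)$ on $C$. Using $\ell > |V(H[s])| + m + s$, I would show that during this catch-up the robber cannot reach another middle edge within $S(b)$ (requiring at least $2(s+\ell)$ steps), nor ascend $Q(b)$ to $q(b)$ (requiring at least $s+\ell+m$ steps); both quantities exceed $|V(H[s])|/2$. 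Hence cop $A$ always re-establishes the invariant and, once in force, effectively seals the robber off from reaching $C$.

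Cop $B$'s task is the classical shadow strategy with respect to the retraction $\rho$: maintain position $\rho(r_t)$ at the end of each round. Once this invariant is in place, any move of the robber to a vertex $r \in H_1 \cup Q \cup C$ (where $\rho(r) = r$) leads to immediate capture. Therefore, to survive the robber must remain in $V(H_2)$ minus the shared vertices, which is a disjoint union of trees $B(r_2(a)) \setminus \{v(a,b) \mid b \in N_{H[s]}(a)\}$, one per $a \in V(H[s])$; by connectivity the robber is confined to a single such tree $B(r_2(a_r)) \setminus \{\text{shared leaves}\}$. Once both shadow invariants hold, cop $A$ may freely abandon $q(a_r)$: every route from the robber's tree to $C$ or to any other region crosses a shared vertex where $B$'s shadow captures. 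Cop $A$ then descends $Q(a_r)$ to $r_1(a_r)$, traverses $T_1(a_r)$ and a middle edge to $r_2(a_r)$, and enters the tree, where a single cop suffices to catch in finitely many further rounds.

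The main obstacle I anticipate is establishing cop $B$'s shadow invariant from an arbitrary initial placement of the robber, since $H_1 \cup Q \cup C$ is not cop-win on its own and the standard shadow catch-up in the retract image does not apply directly. My plan is to argue that once cop $A$'s $C$-shadow is in place, the residual pursuit for $B$ effectively reduces to a single-cop chase inside the tree-like subgraph $B(r_1(a_r)) \cup Q(a_r)$, where standard tree pursuit succeeds; the parameter asymmetry $\ell \gg m \gg s$ is engineered precisely to make this timing argument go through.
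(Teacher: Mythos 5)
Your overall architecture (a cop guarding the robber's region from the cycle $C$, a second cop maintaining a shadow under the layer-swapping map $H_2\to H_1$, and a final chase in the forest $H_2$ minus the middle edges) matches the skeleton of the intended proof, and your guarding analysis for cop $A$ and your endgame are essentially sound. However, there is a genuine gap exactly at the point you flag yourself: how cop $B$ ever \emph{establishes} the shadow invariant. Your fallback plan --- ``once $A$'s $C$-shadow is in place, the residual pursuit for $B$ reduces to a single-cop chase inside the tree-like subgraph $B(r_1(a_r))\cup Q(a_r)$'' --- does not work, for two reasons. First, $A$'s guard on $C$ only prevents the robber from reaching $C$; it does not confine him to one region $S(a_r)$. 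He can keep migrating between regions along the long paths $P_i(ab)$, and the union of regions he visits contains long cycles: every cycle of $H[s]$ (which exists, as $H[s]$ is $2k$-regular) lifts to a cycle of length roughly $\girth(H[s])\cdot(2\ell+2s+1)$ inside $H_{-}$, and even a single region $S(a)$ is not a tree, since $B(r_1(a))$ and $B(r_2(a))$ are glued at the $2k$ shared vertices $v(a,b)$, creating cycles of length about $4(s+\ell)$. A lone cop chasing in such a graph never corners the robber, and since the robber never needs to approach $C$, cop $A$ sitting at $q(a_r)$ never becomes a threat; so the two invariants you want are never simultaneously achieved. Second, the standard retract argument is unavailable, as you note, because $H_1\cup Q\cup C$ is not cop-win (it contains the induced cycle $C$ of length $\geq 5$, among many longer ones), so $B$ cannot first capture the image $\rho(r_t)$ on its own.

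What is missing is the mechanism that actually forces the shadow configuration, and it requires the guarding cop to do more than guard: when $B$'s pursuit in $H_{-}$ forces the robber to commit to traversing some long path towards $T_1(b)\cup T_2(b)$, the cop on $C$ must leave $q(a)$, race along $C$ to $q(b)$, down $Q(b)$ and through $T_1(b)$ to the $H_1$-endpoint $w_1$ of $P_1(ab)$; the hypothesis $\ell>|V(H[s])|+m+s$ is used precisely so that this detour of at most $|V(C)|+m+s<\ell$ steps beats the robber, who still needs at least $\ell$ steps to reach $T_1(b)\cup T_2(b)$. Only this cut-off forces the robber either into capture at $w_1$ or into $H_2$ at the corresponding copy $w_2$, which is what creates the shadow pair; also note one must argue (via $B$ always shortening the distance in $H_{-}$ whenever the robber stalls or backtracks) that the robber is indeed forced to ``move forward'' and commit to such a traversal, and one must check, as in \eqref{eqn:distance_btw_Voronoi}, that the guard is re-establishable during this maneuver. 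In your proposal the bound $\ell>|V(H[s])|+m+s$ is only used to maintain $A$'s guard on $C$, where the weaker bound $\ell+s+m>|V(H[s])|$ would already suffice; the decisive use of the hypothesis --- winning the race to the \emph{far} end of a long path inside $H_1$ --- is absent, and without it the proof does not go through.
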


Before starting with the formal proof, let us already give a vague idea of the winning strategy for two cops.
Two cops could easily capture the robber on the cycle~$C$ (including the attached paths~$Q(a)$ for~$a \in V(H[s])$).
Thus, they force him to \enquote{flee} to~$H_1$ at some point.
In a second phase, they can even force him to flee to~$H_2$.
Loosely speaking, cop~$c_1$ stays on~$C$ to prevent the robber from getting back on $C$, while cop~$c_2$ always moves towards the robber in~$H[s,\ell,m]-E(C)$.
Whenever the robber traverses one of the long paths~$P_1(ab)$ for some~$ab \in E(H[s])$, say from~$T_1(a)$ towards~$T_1(b)$, then~$c_1$ can go in $\abs{V(C)} + m + s < \ell$ steps along~$C$ to~$q(b)$ and along~$Q(b)$ and through~$T_1(b)$ to arrive at the other end of~$P_1(ab)$ before the robber.
However, to escape~$c_2$, the robber must traverse a path~$P_1(ab)$ eventually, with his only way to escape being to turn into~$H_2$ at the middle edge~$e(ab)$.
At some point, we enforce the situation that the robber occupies some vertex~$v$ of~$H_2$ and one of the cops, say~$c_1$, occupies the corresponding copy of~$v$ in~$H_1$.
Now, in a third phase, the robber moves in~$H_2$ while~$c_1$ always copies his moves in~$H_1$.
This prohibits the robber to ever walk along a middle edge~$e(ab)$ for some~$ab \in E(H[s])$.
But without these edges,~$H_2$ is a forest, and thus cop~$c_2$ can capture the robber in the tree component in which the robber is located.

\begin{proof}[Proof of \cref{lem:Hslm_cop_number_2}]
    We describe a strategy for two cops, the first cop~$c_1$ and the second cop~$c_2$, to capture the robber in the classical game played on~$H[s,\ell,m]$.
    They both start (anywhere) on the cycle~$C$.
    There are three phases, each eventually ensuring a particular configuration of the game after the cops' turn.

    We start with a convenient definition and observation:
    A cop \emph{guards the robber on~$C$} if, for some vertex~$a$ in~$H[s]$,  the robber occupies some vertex $w \in Q(a) \cup T_1(a) \cup T_2(a)$ and the cop occupies the corresponding vertex~$q(a)$ on~$C$.
    Observe that if a cop already guards the robber on~$C$, then the cop can maintain this property by always moving along~$C$ towards the vertex~$q(b)$ for which the robber occupies some vertex in~$S(b)$.
    In fact, by Equation~\eqref{eqn:distance_btw_Voronoi}, we have~$\dist_{H_{-}}(v,w) > 2\ell$ if~$v \in S(a)$ and~$w \in S(b)$ with~$a \neq b$ and~$ab \notin E(H[s])$.
    Also, if~$ab$ is an edge in~$H[s]$, then the distance in~$H_{-}$ between $T_1(a) \cup T_2(a)$ and $T_1(b) \cup T_2(b)$ is equal to the length of~$P_i(ab)$, which is $2\ell+1 > 2\ell$.
    On the other hand, $\dist_C(q(a),q(b)) \leq \abs{V(C)} = \abs{V(H[s])} \leq \ell$, and hence the cop is fast enough on~$C$ in both cases.

    \proofsubparagraph{First Phase.}
    The target configuration of the first phase is that both cops guard the robber on~$C$.
    The cops can achieve this as follows:
    After both starting at the same vertex~$q$ on~$C$, cop~$c_1$ moves along~$C$ to decrease its distance to the robber (while staying on~$C$).
    The second cop~$c_2$ stays at~$q$, which forces the robber to eventually leave the cycle~$C$.
    After finitely many steps,~$c_1$ guards the robber on~$C$ and continues to maintain this property.
    This prevents the robber from ever coming back to~$C$ without being captured.
    Then, cop~$c_2$ can move along~$C$ towards the first cop to eventually reach the target configuration.

    \proofsubparagraph{Second Phase.}
    The target configuration of the second phase is that the robber occupies some vertex~$w_2$ in~$H_2$, and one cop occupies the corresponding copy~$w_1$ of~$w_2$ in~$H_1$.
    To this end, $c_1$ starts by maintaining the guarding property, whereas~$c_2$ leaves~$C$ and always goes towards the robber along a shortest path in~$H_{-}$.
    Whenever the robber does not move or goes back and forth along an edge (i.e., is at the same vertex as two moves ago), then~$c_2$ reduces its distance to the robber.
    As this may happen only a bounded number of times without the robber being captured by~$c_2$, we may assume that the robber always \enquote{moves forward} and thus eventually occupies one of the two vertices of the middle edge $e_1(ab) = e_2(ab)$ corresponding to some edge~$ab$ in~$H[s]$.
    Moreover, the robber continues along~$P_1(ab)$ or~$P_2(ab)$ towards one of~$T_1(a)$, $T_2(a)$, $T_1(b)$ or~$T_2(b)$; say towards~$T_i(b)$ for some~$i \in \{1,2\}$.

    At this point, cop~$c_1$ goes along~$C$ to~$q(b)$, then along~$Q(b)$ to~$r_1(b)$, and along~$T_1(b)$ to the leaf~$w_1$ of~$T_1(b)$ that is the endpoint of~$P_1(ab)$.
    This takes~$c_1$ at most $\abs{V(C)} + m + s < \ell$ steps.
    On the other hand, the robber needs at least~$\ell$ steps to reach~$T_1(b) \cup T_2(b)$.
    This way, the robber (always moving forward) is either captured by~$c_1$ on~$w_1$, reaches the copy~$w_2$ of~$w_1$ in~$H_2$, or the copy in~$H_2$ of the vertex occupied by~$c_2$ in~$H_1$, which is the desired target configuration.

    \proofsubparagraph{Third Phase.}
    Finally, we shall capture the robber.
    Without loss of generality, let~$c_1$ be the cop in~$H_1$ that copies the robber's moves in~$H_2$.
    Cop~$c_2$ continues to go towards the robber along a shortest path in~$H_{-}$, forcing him to keep moving forward.
    As $H_2 - \{e_2(ab) \mid ab \in E(H[s])\}$ is a forest, the robber must eventually occupy an endpoint of~$e_2(ab)$ for some $ab \in E(H[s])$, where he is captured by~$c_1$.
\end{proof}

In contrast to the bounded classical cup number, the surrounding cop numbers are unbounded:

\begin{lemma}
    \label{lem:Hslm_surrounding_cop_number_k}
    For every $s \geq 1$, $m > 2s+1$, and $\ell > 3s+1$, it holds that $c_{V, \mathrm{r}}(H[s,\ell,m]) \geq k = 2^{s-1}$.
\end{lemma}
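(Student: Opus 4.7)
The plan is to give the robber an evasion strategy against any $k - 1$ restrictive vertex cops. The key structural observation is that $H_2$ meets $H_1$ only at the middle-edge vertices $v(a, b)$, so to enter a tree $T_2(a)$ from outside its ball $B(r_2(a))$, a cop must traverse roughly half of some path $P_2(ab)$ of length $2\ell + 1$. The conditions $\ell > 3s + 1$ and $m > 2s + 1$ therefore ensure that on a time scale of order $s$, only cops already inside a single region $S(a)$ can possibly threaten the robber; cops further away need many rounds to approach.

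I will have the robber stay in $H_2$, cycling through the trees $T_2(a)$. Within a single tree $T_2(a)$ of depth $s$, the robber uses a pigeonhole descent: call a leaf $w$ of $T_2(a)$ \emph{free} if no cop lies within distance $s$ of $w$ along the attached path $P_2(ab)$ or inside $T_2(a)$. Since $T_2(a)$ has $2k$ leaves but only $k - 1$ cops exist, at every moment at least $k + 1$ leaves are free. Starting at the root $r_2(a)$, at every internal vertex along the descent at least one of the two child-subtrees contains a free leaf (by halving the free-leaf count at each step), so the robber reaches a free leaf in $s$ steps. He then walks deep into $P_2(ab)$, where no cop is within distance $s$, continues to the middle edge $e(ab)$, crosses into $B(r_2(b))$, and repeats the descent in $T_2(b)$.

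The two main obstacles are (a) the restrictive-version requirement that the robber must vacate his vertex whenever a cop steps onto it and may not move onto a cop-occupied neighbor, and (b) maintaining the free-leaf invariant during the $s$ descent-steps while the cops also move. For (a), the pigeonhole count forces $v_r$ to have at least one adjacent non-cop descent vertex: if all $\deg(v_r) \leq 3$ neighbors were cops, then at most $k - 1 - \deg(v_r)$ cops would remain to threaten the $2k$ leaves of $T_2(a)$, still leaving several free leaves and therefore a direction the robber can re-plan toward. For (b), during $s$ successive cop-turns no cop outside $S(a)$ can come within distance $s$ of any leaf of $T_2(a)$ (using $\ell > 3s + 1$ and $m > 2s + 1$), and each cop inside $S(a)$ can newly threaten at most one leaf per turn; since we start with $k + 1$ free leaves and only $k - 1$ cops, a leaf chosen at the start of the descent is still free at its end.

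The hard part will be the careful accounting of how threats propagate over the $s$ rounds of descent and over the much longer transit along $P_2(ab)$ to the next tree, including verifying that re-planning on any forced turn does not blow up the distance budget. Everything else reduces to the standard pigeonhole applied to the balanced binary tree structure of $T_2(a)$ together with the long-path separations guaranteed by the parameter conditions, yielding $c_{V, \mathrm{r}}(H[s, \ell, m]) \geq k$.
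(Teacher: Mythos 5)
There is a genuine gap, and it is exactly at the point where the construction is designed to be hard. Your notion of a \emph{free} leaf (no cop within distance $s$ inside $T_2(a)$ or along $P_2(ab)$) uses far too small a safety radius: after reaching the chosen leaf, the robber still needs about $2\ell+s$ further steps to reach the next root $r_2(b)$, and during that transit any cop within distance roughly $2\ell+2s$ of the destination region can converge on $r_2(b)$ (or on $T_2^{\mathrm{in}}(b)$) and take part in a surround there. So the target $b$ must be chosen safe with respect to a radius of order $2\ell+2s+1$, not $s$; and once the radius is that large, your claim that ``each cop threatens at most one leaf'' is no longer automatic --- the paper needs a separate case analysis (its Claim) plus the girth-$\geq 5$ hypothesis on $H[s]$ to show each cop \emph{blocks} at most one outgoing neighbor of $a$. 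Your remark that on a time scale of order $s$ only cops inside $S(a)$ matter is true for the descent but false for the transit, which is the phase where the surround would actually happen.

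The second, more structural omission is that your bookkeeping never mentions cops in the copy $H_1$. Since $H_1$ and $H_2$ are identified along the middle edges $e(ab)$, a cop sitting in $T_1^{\mathrm{out}}(a)$ or on a path $P_1(ab)$ can race the robber to the middle edge and, if it gets ahead, continue along the second half of $P_2(ab)$ into $T_2^{\mathrm{in}}(b)$. Such a cop is simultaneously close to \emph{all} the paths $P_1(ab')$, so it cannot be charged to a single leaf in advance; this is precisely why the paper's robber cannot ``choose a free leaf at the start of the descent'' (as you do) but must descend $T_2^{\mathrm{out}}(a)$ adaptively, maintaining the invariant that the number of cops in $\{v_2\}\cup V(T)\cup\bigcup_{b\in Y}P_1(ab)$ (note: the mirrored subtree $T\subseteq T_1^{\mathrm{out}}(a)$ and the $H_1$-paths, not the $H_2$-tree) is strictly smaller than the number of still-available leaves below. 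Your static pigeonhole also fails quantitatively: ``halving the free-leaf count at each step'' starting from $k+1$ free leaves gives fewer than one free leaf after $s$ halvings; what survives the descent is the comparative invariant (cops below $<$ available leaves below), not an absolute count of free leaves. Without the large blocking radius, the girth-based one-block-per-cop argument, and the adaptive $H_1$-tracking invariant, the strategy you describe can be defeated: a single cop starting near $r_1(a)$ beats your robber to whichever middle edge he commits to, and a second cop pre-positioned within distance $2\ell$ of $F_2(b)$ completes the surround at the far end.
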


Again, we start by giving a vague idea how the robber can avoid being surrounded by~$k-1$ vertex cops:
The robber stays solely in~$H_2$, and he moves between roots~$r_2(x)$ for~$x \in V(H[s])$, always from a root~$r_2(a)$ to the next root~$r_2(b)$ for which the edge~$ab$ in~$H[s]$ is directed from~$a$ to~$b$.
In~$H[s]$, vertex~$a$ has~$k$ outgoing neighbors, and we show that at least one such neighbor~$b$ is always \enquote{safe} for the robber to escape to.
However, it is quite tricky to identify this safe neighbor.
Indeed, the robber has to start moving in the \enquote{right} direction down the tree~$T_2(a)$ always observing the cops' response, before he can be absolutely sure which outgoing neighbor~$b$ of~$a$ is safe.
With suitable choices of~$s$, $\ell$ and~$m$, the robber is fast enough at~$r_2(b)$ to then choose his next destination from there.
The crucial point is that the cops can \enquote{join} the robber when he traverses the middle edge~$e(ab)$, but they can never be one step ahead of the robber on~$P_2(ab)$; and thus never surround him.

\begin{proof}[Proof of \cref{lem:Hslm_surrounding_cop_number_k}]
    We provide a strategy for the robber to avoid getting surrounded indefinitely against any strategy for~$k-1$ cops on~$H[s,\ell,m]$.
    Our strategy ensures that the robber always moves onto a vertex that is not occupied by a cop, and thus works for the restrictive vertex version of the game.
    For this strategy, the robber shall exclusively stay on~$H_2$ in~$H[s,\ell,m]$.
    Recall that for any vertex~$a$ of~$H[s]$, there is a corresponding binary tree~$T_2(a)$ with root~$r_2(a)$, and two subtrees~$T_2^{\mathrm{in}}(a)$ and~$T_2^{\mathrm{out}}(a)$ rooted at~$r_2^{\mathrm{in}}(a)$ and~$r_2^{\mathrm{out}}(a)$, respectively.
    The leaves of~$T_2^{\mathrm{out}}(a)$ and~$T_2^{\mathrm{in}}(a)$ correspond to the outgoing, respectively incoming, edges at~$a$ in~$H[s]$, and for every directed edge from~$a$ to~$b$ in~$H[s]$ there is a path~$P_2(ab)$ of length $2\ell+1$ from a leaf in~$T_2^{\mathrm{out}}(a)$ to a leaf in~$T_2^{\mathrm{in}}(b)$.

    For our strategy, we call a configuration of the game after the robber's turn a \emph{good situation} if there is a vertex~$a$ in~$H[s]$ such that
    \begin{itemize}
        \item the robber occupies~$r_2(a)$~in $H[s,\ell,m]$ and
        \item each cop has distance at least~$2s$ to~$r_2(a)$ in graph $H[s,\ell,m] - \bigl\{\{r_2(a), r_2^{\mathrm{in}}(a)\}\bigr\}$, obtained by removing only the edge between~$r_2(a)$ and~$r_2^{\mathrm{in}}(a)$.
    \end{itemize}
    
    For convenience, let us denote by~$F_i(a)$ (with $i \in \{1,2\}$) the set of all vertices in~$H[s,\ell,m]$ at distance less than~$2s$ to~$r_i(a)$ in $H[s,\ell,m] - \bigl\{\{r_i(a), r_i^{\mathrm{in}}(a)\}\bigr\}$.
    Thus, we have a good situation with the robber at~$r_2(a)$ if no cop occupies a vertex of~$F_2(a)$.
    Since $\ell > s$, we have $F_1(a),F_2(a) \subset S(a)$ for any~$a \in V(H[s])$.
    Together with $\abs{V(H[s])} > 2k$, this implies that the robber can start the game in a good situation, i.e., on a vertex~$r_2(a)$ with no cop in~$F_2(a)$ (as there are only~$k-1$ cops).
 
    Observe that the robber is not surrounded in a good situation because there is no cop on the neighbor~$r_2^{\mathrm{out}}(a)$ of~$r_2(a)$.
    Recall that~$H[s]$ is endowed with an orientation with outdegree and indegree exactly~$k$ at each vertex~$a$.
    Let~$N_{H[s]}^+(a)$ denote the set of all outgoing neighbors of~$a$ in~$H[s]$.
    It is then enough to show that, starting in a good situation at vertex~$r_2(a)$, the robber can directly move from~$r_2(a)$ to~$r_2(b)$ for some $b \in N_{H[s]}^+(a)$ in $2s + 2\ell + 1$ steps (along~$T_2^{\mathrm{out}}(a)$, then~$P_2(ab)$, then~$T_2^{\mathrm{in}}(b)$, cf.~\eqref{eqn:adjacent_roots_close}) to be again in a good situation at vertex~$r_2(b)$ and without being surrounded at any time.
    In fact, it will be enough to ensure that no cop reaches~$F_2(b)$ before the robber reaches~$r_2(b)$.
    (Recall that from a good situation, the cops go first.)

    \begin{claim}
        Let $a$ be a vertex in $H[s]$ and $b,b' \in N^+_{H[s]}(a)$ be two distinct outgoing neighbors of $a$ in $H[s]$.
        Then, there is no vertex $w$ in $H[s,\ell,m] - S(a)$ that has distance at most $2\ell + 2s + 1$ to $F_2(b)$ and distance at most $2\ell + 2s + 1$ to $F_2(b')$.
    \end{claim}
    
    \begin{claimproof}
        Let $w$ be any fixed vertex in $H[s,\ell,m] - S(a)$.
        We have to show that $w$ has distance greater than $2\ell + 2s + 1$ to $F_2(b)$ or $F_2(b')$, or both.
        
        First, observe that a shortest path from~$F_2(b)$ to the cycle~$C$ uses~$\ell-s$ edges of~$P_2(bc)$ for some~$c \in N_{H[s]}(b)$, $\ell$ edges of~$P_1(bc)$, $s$ edges of~$T_1(b)$, and~$m$ edges of~$Q(b)$, for a total of $(\ell-s)+\ell+s+m = 2\ell + m > 2\ell + 2s + 1$ edges.
        Thus, we are done in case $w \in V(C)$.
        Furthermore, if a shortest path from $w$ to $F_2(b)$ (respectively $F_2(b')$) contains a vertex in $C$, then $w$ has distance greater than $2\ell + 2s + 1$ to $F_2(b)$ (respectively $F_2(b')$), and we are done for such $w$ as well.

        % Assume for the sake of contradiction that $w$ is a vertex in $H[s,\ell,m] - S(a)$ with distance at most $2\ell + 2s + 1$ to $F_2(b)$ and $F_2(b')$ for two distinct vertices $b,b' \in N^+_{H[s]}(a)$.
        As $H[s]$ has girth at least $5$, we know that $b$ and $b'$ are not adjacent in $H[s]$.
        Furthermore, we have $N_{H[s]}(b) \cap N_{H[s]}(b') = \{a\}$, i.e., $a$ is the only common neighbor of $b$ and $b'$ in $H[s]$.
        As $w \in S(c)$ for some vertex $c \neq a$ in $H[s]$, we can assume without loss of generality that $c \neq b$ and $c$ is not adjacent to $b$ in $H[s]$.
        Now consider a shortest path $P$ from $w$ to $F_2(b)$ in $H[s,\ell,m]$.
        We already know that $P$ contains no vertex in $C$.
        As $c$ and $b$ are not adjacent in $H[s]$, the path $P$ starts in $S(c)$, ends in $S(b)$, and traverses $S(d)$ for some vertex $d\in N_{H[s]}(b)$.
        But then $P$ contains at least $2\ell$ vertices that are in $S(d)$ in order to traverse $S(d)$.
        Together with the at least $\ell - s$ vertices in $S(b)$ (to reach $F_2(b)$) we see that $w$ has distance at least $3\ell - s > 2\ell + 2s + 1$ to $F_2(b)$.
        This proves the claim.
    \end{claimproof}

    Recall that we are in a good situation with the robber at~$r_2(a)$, and thus no cop occupies a vertex in~$F_2(a)$. 
    We seek to move the robber directly towards~$r_2(b)$ for some~$b \in N^+_{H[s]}(a)$. 
    Guided by the above claim, we say that a cop occupying vertex~$w$ in~$H[s,\ell,m]$ \emph{blocks} vertex~$b \in N_{H[s]}^+(a)$ if~$w$ has distance at most~$2\ell + 2s + 1$ to~$F_2(b)$ and~$w \notin F_1(a)$.
    Note that with this definition, the above claim implies that each cop blocks at most one~$b \in N^+_{H[s]}(a)$.
    Let~$X \subset N^+_{H[s]}(a)$ be the (possibly empty) subset of all outgoing neighbors of~$a$ that are blocked by some cop.

    There are~$k-1$ cops in total, hence~$\abs{X} \leq k-1$, while at the same time~$\abs{N^+_{H[s]}(a)} = k$.
    Thus, we have $N^+_{H[s]}(a) - X \neq \emptyset$.
    However, we shall make a definite decision for a vertex $b \in N^+_{H[s]}(a) - X$ only after the first~$s$ steps. This is due to the cops in~$F_1(a)$ with distance at most $2\ell + 2s + 1$ to some vertex in $N^+_{H[s]}(a) - X$.
    We choose each move for the robber inside~$T_2(a)$ according to the locations of the cops in~$F_1(a)$ at that moment.
    As the first step (after the cops have moved), the robber moves to~$r_2^{\mathrm{out}}(a)$, the root of~$T_2^{\mathrm{out}}(a)$.

    Now consider the situation after step~$j$ of the robber, $j \in \{1, \ldots, s\}$.
    Let the robber occupy the non-leaf vertex~$v_2$ in~$T_2^{\mathrm{out}}(a)$ at distance~$j-1$ from the root~$r_2^{\mathrm{out}}(a)$.
    Consider the copy~$v_1$ of~$v_2$ in~$T_1^{\mathrm{out}}(a)$, and the subtree~$T$ of~$T_1^{\mathrm{out}}(a)$ below~$v_1$ with exactly~$2^{s-j}$ leaves.
    We call a vertex~$b \in N^+_{H[s]}(a)$ \emph{available} if~$b$ is not blocked ($b \notin X$) and~$P_1(ab)$ starts at a leaf of~$T$, and denote by~$Y$ the set of all available vertices.
    We maintain the invariant that after step~$j$ of the robber, strictly less than~$\abs{Y}$ cops occupy a vertex in
    \[
        U = \{v_2\} \cup V(T) \cup \bigcup_{b \in Y} P_1(ab)
        \text{,}
    \]
    meaning that there are more available vertices than cops below~$v_1$.
    Observe that a cop blocking some~$b \in X$ starts at distance at least~$s$ to~$T$ and cannot occupy a vertex in~$U$ after~$j$ steps.
    In particular, this implies that the invariant holds for~$j = 1$, as there are less than~$k$ cops in total, and thus less than $k - \abs{X} = 2^{s-j} - \abs{X} = \abs{Y}$ cops in~$U$.  

    After the next move of the cops, we shall choose the next step of the robber.
    Consider the two subtrees~$T'$ and~$T''$ below~$v_1$, let~$x'$ and~$x''$ be their respective number of leaves corresponding to blocked vertices, and let~$y'$ and~$y''$ be their respective number of leaves corresponding to available vertices.
    By our invariant, there are now (after the cops' move) less than $y' + y'' = 2^{s-j} - (x' + x'')$ cops in~$U - \{v_2\}$.
    Thus, there are less than $y' = 2^{s-j-1} - x'$ cops in the direction of~$T'$, or less than $y'' = 2^{s-j-1} - x''$ cops in the direction of~$T''$ (or both).
    In particular, there is a move for the robber in~$T_2^{\mathrm{out}}(a)$ to maintain our invariant.
    
    After~$s$ moves, the robber occupies some leaf~$v_2$ of~$T_2^{\mathrm{out}}(a)$.
    By our invariant,~$v_2$ is the first vertex of the path~$P_2(ab)$ for some vertex~$b \notin X$ and there is no cop (\enquote{less than~$1$}) in the set~$U = P_1(ab)$.
    Thus, the robber can move along~$P_2(ab)$ in~$2\ell + 1 + s$ further steps to reach~$r_2(b)$ before any of the cops can.
    Finally, if the cops could surround the robber on the way, then at least one cop could also arrive at~$r_2(b)$ before the robber, which we just excluded.
    Hence, we can guarantee a good situation at~$r_2(b)$, which concludes the proof.
\end{proof}

Finally, \cref{lem:Hslm_cop_number_2,lem:Hslm_surrounding_cop_number_k,thm:bounds} immediately give the following corollary, which proves \cref{thm:surrounding_vs_classic}.

\begin{corollary}
    For any $s \geq 1$, $k = 2^{s-1}$, $m \geq 2s+1$, and $\ell \geq \abs{V(H[s])} + m + s$, the graph $G_k = H[s,\ell,m]$ has~$\Delta(G_k) = 3$ and
    \[
        c(G_k) \leq 2 \text{,}
        \quad c_V(G_k) \geq k \text{,}
        \quad c_{V, \mathrm{r}}(G_k) \geq k \text{,}
        \quad c_{E, \mathrm{r}}(G_k) \geq \frac{k}{2} \quad \text{and}
        \quad c_E(G_k) \geq \frac{k}{2}         
        \text{.}
    \]
\end{corollary}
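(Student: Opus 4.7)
The plan is to derive the corollary by combining the two preceding lemmas on $H[s,\ell,m]$ with \cref{thm:bounds} and \cref{eqn:restrictive_smaller}. The point of the hypotheses $m \geq 2s+1$ and $\ell \geq |V(H[s])|+m+s$ is that they simultaneously imply the hypotheses of both \cref{lem:Hslm_cop_number_2} and \cref{lem:Hslm_surrounding_cop_number_k}, so no new graph-theoretic work is required; everything else is monotone bookkeeping.

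First I would verify $\Delta(G_k)=3$ directly from the construction of $H[s,\ell,m]$: every internal binary tree vertex has its parent plus two children, each root $r_1(a)$ gains a third neighbor via $Q(a)$, each cycle vertex $q(a)$ has two cycle neighbors plus one neighbor on $Q(a)$, and each endpoint of an identified middle edge $e(ab)$ has one neighbor along $P_1(ab)$, one along $P_2(ab)$, and one across the middle edge. All remaining vertices (the roots $r_2(a)$ and the interior vertices of all paths) have degree at most $2$.

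Next I would apply \cref{lem:Hslm_cop_number_2} to conclude $c(G_k)\leq 2$, and \cref{lem:Hslm_surrounding_cop_number_k} to conclude $c_{V,\mathrm{r}}(G_k)\geq k$; for the latter I would check $\ell \geq |V(H[s])|+m+s \geq 2k+(2s+1)+s > 3s+1$, which ensures the lemma's hypothesis on $\ell$. The remaining three bounds then fall out without any further construction: \cref{eqn:restrictive_smaller} yields $c_V(G_k)\geq c_{V,\mathrm{r}}(G_k)\geq k$; \cref{itm:vr_er} of \cref{thm:bounds} yields $c_{E,\mathrm{r}}(G_k)\geq \frac{1}{2}c_{V,\mathrm{r}}(G_k)\geq \frac{k}{2}$; and a second application of \cref{eqn:restrictive_smaller} yields $c_E(G_k)\geq c_{E,\mathrm{r}}(G_k)\geq \frac{k}{2}$.

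There is no substantive obstacle at this stage, since all the heavy lifting is packaged inside the two preceding lemmas. The only thing to watch is a possible off-by-one between the corollary's non-strict inequalities on $m$ and $\ell$ and the strict ones in the lemma statements; should this turn into a real issue I would either tighten the corollary's hypotheses by a single unit or observe that the lemma proofs still go through at equality, because cops move first in each round and so simultaneous arrival of a cop and the robber still counts as a capture.
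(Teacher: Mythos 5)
Your proposal is correct and matches the paper's (implicit) argument: the paper derives the corollary exactly this way, citing \cref{lem:Hslm_cop_number_2}, \cref{lem:Hslm_surrounding_cop_number_k} and \cref{thm:bounds} (together with~\eqref{eqn:restrictive_smaller}) with no further work, and your bookkeeping $c_V \geq c_{V,\mathrm{r}} \geq k$, $c_{E,\mathrm{r}} \geq c_{V,\mathrm{r}}/2$, $c_E \geq c_{E,\mathrm{r}}$ is exactly the intended chain. The strict-versus-non-strict mismatch you flag ($m \geq 2s+1$, $\ell \geq |V(H[s])|+m+s$ in the corollary versus $m > 2s+1$, $\ell > |V(H[s])|+m+s$ in the lemmas) is a genuine small inconsistency in the paper itself, and your proposed fix (tighten by one unit or re-examine the race arguments at equality) is the right way to handle it.
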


\section{Conclusion}
We considered the cop numbers of four different surrounding versions of the well-known Cops and Robber game on a graph~$G$, namely $c_V(G)$, $c_{V, \mathrm{r}}(G)$, $c_E(G)$ and $c_{E, \mathrm{r}}(G)$.
Here, index \enquote{$V$} denotes the vertex versions, while index \enquote{$E$} denotes the edge versions, i.e., whether the cops occupy the vertices or the edges of the graph (recall that the robber always occupies a vertex).
An additional index \enquote{$\mathrm{r}$} stands for the corresponding restrictive version, meaning that the robber must not end his turn on a cop or move along an edge occupied by a cop.

Only the two restrictive cop numbers have recently been considered in the literature, the vertex version~$c_{V, \mathrm{r}}(G)$ in~\cite{Bradshaw2019_SurroundingBoundedGenus, Burgess2020_Surround} (denoted by~$\sigma(G)$ and~$s(G)$) and the edge version~$c_{E, \mathrm{r}}(G)$ in~\cite{Crytser2020_Containment, Pralat2015_Containment} (denoted by~$\xi(G)$).

In this paper, we related the four different versions to each other, showing that all of them lie within a factor of (at most)~$2\Delta(G)$ to each other.
We proved that this is tight (up to small additive or multiplicative constants) for all combinations by presenting explicit graph families.
It is an interesting open question to identify the exact constant factors for the lower and upper bounds in \cref{thm:bounds}.
We conjecture that all six presented upper bounds are tight (up to small \emph{additive} constants).
This would mean that optimal strategies for the cops in one surrounding version can indeed be obtained by simulating optimal strategies from different surrounding versions.

As a second main result, we disproved a conjecture by Crytser, Komarov and Mackey~\cite{Crytser2020_Containment} by constructing a family of graphs with maximum degree~$3$ in which the classical cop number is bounded whereas the cop number in all four surrounding versions is unbounded.
It remains open to find other parameters that can be used to bound the surrounding cop numbers from above in combination with the classical cop number.

\bibliographystyle{plainurl}
\bibliography{references}

\end{document}